\tikzset{node distance=3cm, auto}
\newtheorem{theorem}{Theorem}[section]
\newtheorem{proposition}[theorem]{Proposition}
\newtheorem{definition}[theorem]{Definition}
\newtheorem{corollary}[theorem]{Corollary}
\newtheorem{remark}[theorem]{Remark}
\newtheorem{examples}[theorem]{Examples}
\newcommand*\interior[1]{\mathring{#1}}
\def\A{\mathcal{A}}
\def\F{\mathcal{F}}
\def\G{\mathcal{G}}
\def\H{\mathcal{H}}
\def\I{\mathcal{I}}
\def\P{\mathcal{P}}
\def\J{\mathcal{J}}
\def\K{\mathcal{K}}
\def\L{\mathcal{L}}
\def\Nu{\mathcal{N}}
\def\Ro{\mathcal{R}}
\def\S{\mathcal{S}}
\def\U{\mathcal{U}}
\def\W{\mathcal{W}}
\def\C{\mathbb{C}}
\def\D{\mathbb{D}}
\def\N{\mathbb{N}}
\def\lin{\mathrm{lin}}
\begin{document}

\title[On composition ideals and dual ideals of bounded holomorphic mappings]{On composition ideals and dual ideals \\ of bounded holomorphic mappings}

\author[M. G. Cabrera-Padilla]{Mar\'ia de G\'ador Cabrera-Padilla}
\address[Mar\'ia de G\'ador Cabrera-Padilla]{Departamento de Matem\'{a}ticas, Universidad de Almer\'{i}a, 04120 Almer\'{i}a, Spain.}
\email{m\_gador@hotmail.com}

\author[A. Jim{\'e}nez-Vargas]{Antonio Jim{\'e}nez-Vargas}
\address[Antonio Jim{\'e}nez-Vargas]{Departamento de Matem{\'a}ticas, Universidad de Almer{\'i}a, 04120, Almer{\'i}a, Spain}
\email{ajimenez@ual.es}

\author[D. Ruiz-Casternado]{David Ruiz-Casternado}
\address[David Ruiz-Casternado]{Departamento de Matem{\'a}ticas, Universidad de Almer{\'i}a, 04120, Almer{\'i}a, Spain}
\email{davidrc3005@gmail.com}

\date{\today}

\subjclass[2020]{46E15, 46G20, 47B10}
\keywords{Holomorphic mapping, operator ideal, linearization, factorization theorems.}


\begin{abstract}
Applying a linearization theorem due to J. Mujica \cite{Muj-91}, we study the ideals of bounded holomorphic mappings $\H^\infty\circ \I$ generated by composition with an operator ideal $\I$. The bounded-holomorphic dual ideal of $\I$ is introduced and its elements are characterized as those that admit a factorization through $\I^\mathrm{dual}$. For complex Banach spaces $E$ and $F$, we also analyze new ideals of bounded holomorphic mappings from an open subset $U\subseteq E$ to $F$ such as $p$-integral holomorphic mappings and $p$-nuclear holomorphic mappings with $1\leq p<\infty$. We prove that every $p$-integral ($p$-nuclear) holomorphic mapping from $U$ to $F$ has relatively weakly compact (compact) range. 

\end{abstract}
\maketitle

\section*{Introduction}\label{section 0}

Let $E$ and $F$ be complex Banach spaces and let $U$ be an open subset of $E$. A mapping $f\colon U\to F$ is said to be \textit{locally compact (resp., locally weakly compact, locally Rosenthal, locally Asplund)} if every point $x\in U$ has a neighborhood $U_x\subseteq U$ such that $f(U_x)$ is relatively compact (resp., relatively weakly compact, Rosenthal, Asplund) in $F$.

R. M. Aron and M. Schottenloher \cite{AroSch-76} proved that every locally compact holomorphic mapping $f\colon E\to F$ admits a factorization of the form $f=T\circ g$, where $G$ is a Banach space, $T\colon G\to F$ belongs to the ideal of compact operators and $g\colon E\to G$ is holomorphic. Analogous results were stated by R. Ryan \cite{Rya-88}, M. Lindstr\"om \cite{Lin-89} and N. Robertson \cite{Rob-92} for locally weakly compact holomorphic mappings, locally Rosenthal holomorphic mappings and locally Asplund holomorphic mappings, respectively, with $T$ belonging to the corresponding ideal of bounded linear operators.

In \cite{GonGut-00}, M. Gonz\'alez and J. M. Guti\'errez provided a unified approach to this problem by introducing a new class of holomorphic mappings which allowed them to extend the aforementioned factorizations to every closed surjective operator ideal.  

Since many operator ideals $\I$ can be naturally associated to polynomial ideals $\P$, R. Aron, G. Botelho, D. Pellegrino and P. Rueda \cite{AroBotPelRue-10} initiated a research program whose objective was to relate those holomorphic mappings $f$ that admit a factorization $f=T\circ g$, where $T$ belongs to an operator ideal $\I$ and $g$ is holomorphic, with those $f$ whose derivatives belong to the associated composition polynomial ideal $\I\circ\P$.

Our aim is to present here some advances in this research program on the factorization of bounded holomorphic mappings $f=T\circ g$, where $T$ is in an operator ideal $\I$ and $g$ is a bounded holomorphic mapping (see Section 3 in \cite{AroBotPelRue-10}). Some recent contributions to linearization of holomorphic functions and to composition ideals are due to D. Baweja and M. Gupta \cite{BawGup-22}, and G. Botelho, V. F\'avaro and J. Mujica \cite{BotFavMuj-19}.

This paper is organized as follows. In Section \ref{section 1}, we set up the linearization theorem of bounded holomorphic mappings due to J. Mujica \cite{Muj-91} that will be a key tool to obtain our results. 

In the spirit of the definition of operator ideal, we introduce in Section \ref{section 2} the concept of ideal of (bounded) holomorphic mappings. Then, we adapt to the holomorphic setting a method to produce operator ideals from a given operator ideal $\I$ and so we obtain ideals of bounded holomorphic mappings by composition with linear operators of $\I$. This useful technique has been applied with the same purpose in different settings as, for example, polynomial and holomorphic \cite{AroBotPelRue-10}, polynomial \cite{AroRue-12}, polynomial and multilinear \cite{BotPelRue-07} and Lipschitz settings \cite{AchRueSanYah-16, Saa-17}. 

In Section \ref{section 3}, we see that some known ideals of holomorphic mappings and bounded holomorphic mappings can be produced by this procedure as, for instance, ideals of holomorphic mappings with  local range or (global) range of bounded type. 

Furthermore, we introduce and analyze new ideals of bounded holomorphic mappings such as $p$-integral and $p$-nuclear holomorphic mappings with $1\leq p<\infty$. We prove that these types of ideals are generated by the method of composition, but also give some examples of ideals of bounded holomorphic mappings that can not be produced in this way. 

The study of holomorphic mappings on Banach spaces which have relatively (weakly) compact range was initiated by J. Mujica in \cite{Muj-91} and continued by J. M. Sepulcre and the last two authors in \cite{JimRuiSep-22}. We prove here that every $p$-nuclear ($p$-integral) holomorphic mapping from $U$ to $F$ has relatively compact (weakly compact) range. 

On the other hand, the dual ideal $\I^\mathrm{dual}$ of an operator ideal $\I$ is the operator ideal formed by all bounded linear operators $T\colon E\to F$ such that its adjoint operator $T^*$ belongs to $\I(F^*,E^*)$. This procedure to produce ideals of linear operators has been applied to generate ideals of other types of mappings: polynomials and multilinear mappings between Banach spaces \cite{Pie-83, FloGar-03}, homogeneous polynomials between Banach spaces \cite{BotCalMor-14} and Lipschitz mappings from metric spaces into Banach spaces \cite{AchRueSanYah-16, Saa-17}. 

Motivated by these works and with the aid of the transpose of a bounded holomorphic mapping, we carry out in Section \ref{section 4} a similar study of the dual procedure in the setting of bounded holomorphic mappings. To be more precise, we introduce the bounded-holomorphic dual $(\I^{\H^\infty})^\mathrm{dual}$ of an operator ideal $\I$ and prove that $(\I^{\H^\infty})^\mathrm{dual}$ is not only a bounded-holomorphic ideal but also belongs to the class of composition ideals, that is, the bounded holomorphic mappings belonging to $(\I^{\H^\infty})^\mathrm{dual}$ are exactly those that admit a factorization through $\I^\mathrm{dual}$. 

We refer to the monograph by A. Pietsch \cite{Pie-80} for the theory of operator ideals, the book by J. Diestel, H. Jarchow and A. Tonge \cite{DisJarTon-95} for the theory of integral and nuclear operators, and the book by J. Mujica \cite{Muj-86} for the theory of holomorphic mappings on infinite-dimensional spaces.

\section{Preliminaries}\label{section 1}

From now on, $E$ and $F$ will denote complex Banach spaces and $U$ an open subset of $E$.

Let us recall that a mapping $f\colon U\to F$ is said to be \textit{holomorphic} if for each $a\in U$ there exist an open ball $B(a,r)$ with center at $a$ and radius $r>0$ contained in $U$ and a sequence of continuous $m$-homogeneous polynomials $(P_m)_{m\in\N_0}$ from $E$ into $F$ such that 
$$
f(x)=\sum_{m=0}^\infty P_m(x-a),
$$
where the series converges uniformly for $x\in B(a,r)$. 
A mapping $P\colon E\to F$ is said to be a \textit{continuous $m$-homogeneous polynomial} if there exists a continuous $m$-linear mapping $A\colon E\to F$ such that $P(x)=A(x,\stackrel{(m)}{\ldots},x)$ for all $x\in E$. 

If $U\subseteq E$ and $V\subseteq F$ are open sets, $\H(U,V)$ will represent the set of all holomorphic mappings from $U$ to $V$. We will denote by $\H(U,F)$ the linear space of all holomorphic mappings from $U$ into $F$, and by $\H^\infty(U,F)$ the subspace of all $f\in\H(U,F)$ such that $f(U)$ is bounded in $F$. 
In the case $F=\C$, we will write $\H(U)$ and $\H^\infty(U)$ instead of $\H(U,\C)$ and $\H^\infty(U,\C)$, respectively.

It is known that the linear space $\H^\infty(U)$, equipped with the uniform norm:
$$
\left\|f\right\|_\infty=\sup\left\{\left|f(x)\right|\colon x\in U\right\}\qquad \left(f\in\H^\infty(U)\right),
$$
is a dual Banach space. We denote by $\G^\infty(U)$ the \textit{geometric predual of $\H^\infty(U)$}. Let us recall that $\G^\infty(U)$ is the norm-closed linear hull in $\H^\infty(U)^*$ of the set $\left\{\delta(x)\colon x\in U\right\}$ of evaluation functionals defined by  
$$
\delta(x)(f)=f(x)\qquad \left(f\in\H^\infty(U)\right).
$$
The following linearization theorem by J. Mujica \cite{Muj-91} will be an essential tool to establish our results. 

Given a complex Banach space $E$, we will denote by $B_E$, $S_E$ and $E^*$ the closed unit ball, the unit sphere and the dual space of $E$, respectively. If $E$ and $F$ are Banach spaces, $\L(E,F)$ denotes the Banach space of all continuous linear operators from $E$ into $F$ with the operator canonical norm.

\begin{theorem}\cite[Theorem 2.1]{Muj-91}\label{teo0} 
Let $E$ be a complex Banach space and $U$ be an open subset of $E$. 
\begin{enumerate}
\item $\H^\infty(U)$ is isometrically isomorphic to $\G^\infty(U)^*$, under the mapping $J_U\colon\H^\infty(U)\to\G^\infty(U)^*$ given by 
$$
J_U(f)(\gamma)=\gamma(f)\qquad \left(\gamma\in\G^\infty(U),\; f\in\H^\infty(U)\right).
$$ 
\item The mapping $g_U\colon U\to\G^\infty(U)$ defined by $g_U(x)=\delta(x)$ for all $x\in U$ is holomorphic with $g_U(U)\subseteq S_{\G^\infty(U)}$. 
\item For each complex Banach space $F$ and each mapping $f\in\H^\infty(U,F)$, there exists a unique operator $T_f\in\L(\G^\infty(U),F)$ such that $T_f\circ g_U=f$. 
Furthermore, $\left\|T_f\right\|=\left\|f\right\|_\infty$. 
\item The mapping $f\mapsto T_f$ is an isometric isomorphism from $\H^\infty(U,F)$ onto $\L(\G^\infty(U),F)$.$\hfill\qed$
\end{enumerate}
\end{theorem}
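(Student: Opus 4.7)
The plan is to establish the four parts in an order that respects their logical dependencies: first part (2), then parts (1), (3), and (4) in turn, since each builds on the previous one.

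For part (2), I would first check that $g_U(U)\subseteq S_{\G^\infty(U)}$: the estimate $|\delta(x)(f)|=|f(x)|\leq\|f\|_\infty$ yields $\|\delta(x)\|\leq 1$, and equality follows by evaluating at the constant function $1\in\H^\infty(U)$. To see that $g_U$ is holomorphic, the natural route is to verify weak holomorphy directly: for each $f\in\H^\infty(U)$, the scalar function $x\mapsto\delta(x)(f)=f(x)$ is holomorphic on $U$. Since the family $\{\delta(\cdot)(f):f\in\H^\infty(U)\}$ separates points of $\G^\infty(U)$ and $g_U$ is locally bounded, a standard criterion for Banach-valued mappings (essentially Dunford's theorem) upgrades weak holomorphy to strong holomorphy.

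For part (1), the map $J_U$ is manifestly linear, and $|J_U(f)(\gamma)|=|\gamma(f)|\leq\|\gamma\|\|f\|_\infty$ gives $\|J_U(f)\|\leq\|f\|_\infty$; the reverse inequality follows by testing on $\delta(x)\in S_{\G^\infty(U)}$, since $|f(x)|=|J_U(f)(\delta(x))|\leq\|J_U(f)\|$. Surjectivity is the delicate point: given $\phi\in\G^\infty(U)^*$, define $f(x)=\phi(g_U(x))$; this is holomorphic as a composition using part (2) and satisfies $\|f\|_\infty\leq\|\phi\|$, and $J_U(f)=\phi$ holds on the dense subspace $\lin\{\delta(x):x\in U\}$, hence on all of $\G^\infty(U)$.

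For part (3), I would first define $T_f$ on $\lin\{\delta(x):x\in U\}$ by linear extension of $T_f(\delta(x))=f(x)$. Well-definedness and boundedness are obtained simultaneously from the following Hahn-Banach computation: for $\gamma=\sum_i\lambda_i\delta(x_i)$,
\[
\Bigl\|\sum_i\lambda_i f(x_i)\Bigr\|=\sup_{y^*\in B_{F^*}}\Bigl|\sum_i\lambda_i(y^*\circ f)(x_i)\Bigr|=\sup_{y^*\in B_{F^*}}|\gamma(y^*\circ f)|\leq\|\gamma\|\|f\|_\infty,
\]
since each $y^*\circ f$ lies in $\H^\infty(U)$ with norm at most $\|f\|_\infty$. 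Hence $T_f$ extends by continuity to an operator in $\L(\G^\infty(U),F)$ with $\|T_f\|\leq\|f\|_\infty$; the identity $T_f\circ g_U=f$ is built in, and the reverse norm inequality follows from $\|f(x)\|=\|T_f(g_U(x))\|\leq\|T_f\|$ using $\|g_U(x)\|=1$. Uniqueness is immediate from density of $\lin\{g_U(U)\}$.

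Part (4) is then almost formal: $f\mapsto T_f$ is linear and isometric by (3), hence injective, and for surjectivity, any $T\in\L(\G^\infty(U),F)$ is of the form $T_{T\circ g_U}$, with $T\circ g_U\in\H^\infty(U,F)$ thanks to part (2). The main obstacle I foresee is establishing strong holomorphy of $g_U$ in part (2); depending on the amount of vector-valued holomorphy machinery one invokes, some care is needed to pass from pointwise (weak) holomorphy of $\delta(\cdot)(f)$ to norm holomorphy of $g_U$ itself.
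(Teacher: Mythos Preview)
The paper does not prove this theorem; it is quoted from Mujica \cite{Muj-91} and stated without proof (note the $\qed$ symbol at the end of the statement). Your sketch is a correct and self-contained reconstruction. The order you choose avoids circularity: the separating property of $\{J_U(f):f\in\H^\infty(U)\}\subseteq\G^\infty(U)^*$ that you use in part (2) follows directly from the definition of $\G^\infty(U)$ as a subspace of $\H^\infty(U)^*$, so it does not presuppose part (1). The weak-to-strong holomorphy criterion you invoke (local boundedness together with holomorphy of $\phi\circ g_U$ for $\phi$ in a separating subspace of the dual implies holomorphy of $g_U$) is indeed the technical heart of the argument and is available in Mujica's monograph \cite{Muj-86}; your identification of this as the main obstacle is accurate.
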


We now recall some concepts and results of the theory of operator ideals which have been borrowed from \cite{DisJarTon-95, Pie-80}. 

An operator ideal $\I$ (see definition in \cite[1.1.1]{Pie-80}) is said to be:
\begin{enumerate}
\item \textit{closed} if $\I(E,F)$ is closed in $\L(E,F)$ for all Banach spaces $E$ and $F$ \cite[Section 4.2]{Pie-80}.
\item \textit{injective} if given an operator $T\in\L(E,F)$, a Banach space $G$ and an injective operator with closed range $\iota\in\L(F,G)$, we have that $T\in\I(E,F)$ whenever $\iota\circ T\in\I(E,G)$ \cite[Section 4.6]{Pie-80}.
\item \textit{surjective} if given an operator $T\in\L(E,F)$, a Banach space $G$ and a surjective operator $\pi\in\L(G,E)$, we have that $T\in\I(E,F)$ whenever $T\circ\pi\in\I(G,F)$ \cite[Section 4.7]{Pie-80}. 
\end{enumerate}

An operator $T\in\L(E,F)$ is said to be \textit{compact (resp., separable, weakly compact, Rosenthal, Asplund)} if $T(B_E)$ is relatively compact (resp., separable, relatively weakly compact, Rosenthal, Asplund) in $F$. We denote by $\F(E,F)$, $\overline{\F}(E,F)$, $\K(E,F)$, $\S(E,F)$, $\W(E,F)$, $\Ro(E,F)$ and $\A(E,F)$ the ideals of bounded finite-rank linear operators, approximable linear operators (i.e., operators which are the norm limits of bounded finite-rank operators), compact linear operators, bounded separable linear operators, weakly compact linear operators, Rosenthal linear operators and  Asplund linear operators from $E$ into $F$, respectively. The following inclusions are known:
\begin{align*}
\F(E,F)&\subseteq\overline{\F}(E,F)\subseteq\K(E,F)\subseteq\W(E,F)\subseteq\Ro(E,F)\cap\A(E,F),\\
\K(E,F)&\subseteq\S(E,F).
\end{align*}

\section{Ideal and composition ideal of bounded holomorphic mappings}\label{section 2}

Inspired by the preceding definitions, we introduce the concept of an ideal of (bounded) holomorphic mappings and its different properties.

\begin{definition}\label{def-ideal}
An ideal of holomorphic mappings (or simply, a holomorphic ideal) is a subclass $\I^{\H}$ of the class $\H$ of all holomorphic mappings such that for any complex Banach space $E$, any open subset $U$ of $E$ and any complex Banach space $F$, the components 
$$
\I^{\H}(U,F):=\I^{\H}\cap\H(U,F)
$$ 
satisfy the following three properties:
\begin{enumerate}
\item[(I1)] $\I^{\H}(U,F)$ is a linear subspace of $\H(U,F)$.
\item[(I2)] For any $g\in\H(U)$ and $y\in F$, the mapping $g\cdot y\colon x\mapsto g(x)y$ from $U$ to $F$ is in $\I^{\H}(U,F)$. 
\item[(I3)] The ideal property: If $H,G$ are complex Banach spaces, $V$ is an open subset of $H$, $h\in\H(V,U)$, $f\in\I^{\H}(U,F)$ and $S\in\L(F,G)$, then $S\circ f\circ h$ is in $\I^{\H}(V,G)$. 
\end{enumerate}

An ideal of bounded holomorphic mappings (or simply, a bounded-holomorphic ideal) is a subclass $\I^{\H^\infty}$ of $\H^\infty$ of the form $\I^{\H^\infty}=\I^{\H}\cap\H^\infty$, where $\I^{\H}$ is a holomorphic ideal. 

A bounded-holomorphic ideal $\I^{\H^\infty}$ is said to be:
\begin{enumerate}
	\item closed if each component $\I^{\H^\infty}(U,F)$ is a closed subspace of $\H^\infty(U,F)$ with the topology of supremum norm.
	\item injective if for any mapping $f\in\H^\infty(U,F)$, any complex Banach space $G$ and any isometric linear embedding $\iota\colon F\to G$, we have that $f\in\I^{\H^\infty}(U,F)$ whenever $\iota\circ f\in\I^{\H^\infty}(U,G)$. 
	\item surjective if for any mapping $f\in\H^\infty(U,F)$, any open subset $V$ of a complex Banach space $G$ and any surjective mapping $\pi\in\H(V,U)$, we have that $f\in\I^{\H^\infty}(U,F)$ whenever $f\circ \pi\in\I^{\H^\infty}(V,F)$.
\end{enumerate}
 
A bounded-holomorphic ideal $\I^{\H^\infty}$ is said to be normed (Banach) if there exists a function $\left\|\cdot\right\|_{\I^{\H^\infty}}\colon\I^{\H^\infty}\to\mathbb{R}_0^+$ such that for every complex Banach space $E$, every open subset $U$ of $E$ and every complex Banach space $F$, the following three conditions are satisfied:
\begin{enumerate}
	\item[(N1)] $(\I^{\H^\infty}(U,F),\left\|\cdot\right\|_{\I^{\H^\infty}})$ is a normed (Banach) space with $	\left\|f\right\|_\infty\leq\left\|f\right\|_{\I^{\H^\infty}}$ for all $f\in\I^{\H^\infty}(U,F)$.
	\item[(N2)] $\left\|g\cdot y\right\|_{\I^{\H^\infty}}=\left\|g\right\|_\infty\left\|y\right\|$ for all $g\in\H^\infty(U)$ and $y\in F$. 
	\item[(N3)] If $H,G$ are complex Banach spaces, $V$ is an open subset of $H$, $h\in\H(V,U)$, $f\in\I^{\H^\infty}(U,F)$ and $S\in\L(F,G)$, then $\left\|S\circ f\circ h\right\|_{\I^{\H^\infty}}\leq \left\|S\right\|\left\|f\right\|_{\I^{\H^\infty}}$.
\end{enumerate}

\end{definition}

\begin{remark}
According to Definition \ref{def-ideal}, ideals of holomorphic mappings always contain the finite rank maps. So, when restricted to the case of homogeneous polynomials, our definition does not recover the classical notion of polynomial ideals (a polynomial ideal contains the finite type maps, and not necessarily the finite rank maps). We must point out that our notion of ideal of holomorphic mappings is more related to the notion of hyper-ideals of polynomials (which always contain the finite rank maps) rather than to the notion of polynomial ideals. For the theory of hyper-ideals of polynomials, see, e.g., \cite{BotWoo-23}. 
\end{remark}

We now recall the composition method to produce ideals of holomorphic mappings. This linear method has been applied in different contexts: multilinear, polynomial, Lipschitz and holomorphic (see, for example, \cite{AchRueSanYah-16, AroBotPelRue-10, BotPelRue-07,GonGut-00, Saa-17}). 

\begin{definition}\label{comp-ideal}
Let $E,F$ be complex Banach spaces and $U$ be an open set in $E$. Given an operator ideal $\I$, a mapping $f\in\H(U,F)$ (resp. $f\in\H^\infty(U,F)$) belongs to the composition ideal $\I\circ\H$ (resp. $\I\circ\H^\infty$), and we write $f\in\I\circ\H(U,F)$ (resp. $f\in\I\circ\H^\infty(U,F)$), if there are a complex Banach space $G$, an operator $T\in\I(G,F)$ and a mapping $g\in\H(U,G)$ (resp. $g\in\H^\infty(U,G)$) such that $f=T\circ g$. 

If $(\I,\left\|\cdot\right\|_\I)$ is a normed operator ideal and $f\in\I\circ\H^\infty$, we denote 
$$
\left\|f\right\|_{\I\circ\H^\infty}=\inf\left\{\left\|T\right\|_\I\left\|g\right\|_\infty\right\},
$$
where the infimum is extended over all representations of $f$ as above.
\end{definition}

The following result states the linearization of the members of the composition ideal $\I\circ\H^\infty$. The first part was proved in \cite{AroBotPelRue-10}, but we have included it here for the convenience of the reader.

\begin{theorem}\cite[Theorem 3.2]{AroBotPelRue-10}\label{ideal}
Let $\I$ be an operator ideal and $f\in\H^\infty(U,F)$. The following conditions are equivalent:
\begin{enumerate}
	\item $f$ belongs to $\I\circ\H^\infty(U,F)$.
	\item Its linearization $T_f$ is in $\I(\G^\infty(U),F)$.
\end{enumerate}
If $(\I,\left\|\cdot\right\|_\I)$ is a normed operator ideal, we have that $\left\|f\right\|_{\I\circ\H^\infty}=||T_f||_\I$ and the infimum $\left\|f\right\|_{\I\circ\H^\infty}$ is attained at $T_f\circ g_U$ (\textit{Mujica's factorization of $f$}). Furthermore, the mapping $f\mapsto T_f$ is an isometric isomorphism from $(\I\circ\H^\infty(U,F),\left\|\cdot\right\|_{\I\circ\H^\infty})$ onto $(\I(\G^\infty(U),F),\left\|\cdot\right\|_\I)$. 
\end{theorem}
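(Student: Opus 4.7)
The plan is to use Mujica's linearization theorem (Theorem \ref{teo0}) as the pivot, transforming the factorization problem for $f$ into an ideal-membership question for $T_f$, and exploiting the fact that $g_U$ sits in the unit sphere to get sharp norm equalities.

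For the implication (2)$\Rightarrow$(1), suppose $T_f \in \I(\G^\infty(U),F)$. Then Theorem \ref{teo0}(3) gives directly $f = T_f \circ g_U$, and since $g_U \in \H^\infty(U,\G^\infty(U))$ (with $g_U(U) \subseteq S_{\G^\infty(U)}$), this is already a valid representation witnessing $f \in \I \circ \H^\infty(U,F)$ with intermediate space $\G^\infty(U)$. For the reverse implication (1)$\Rightarrow$(2), take a factorization $f = T \circ g$ with $T \in \I(G,F)$ and $g \in \H^\infty(U,G)$. Apply Theorem \ref{teo0}(3) to $g$ to obtain $T_g \in \L(\G^\infty(U),G)$ with $g = T_g \circ g_U$. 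Then $f = T \circ T_g \circ g_U$, and by the ideal property of $\I$, the composition $T \circ T_g$ belongs to $\I(\G^\infty(U),F)$. The uniqueness clause of Theorem \ref{teo0}(3) identifies this composition with $T_f$, so $T_f \in \I(\G^\infty(U),F)$.

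For the normed case, I would establish the norm equality $\|f\|_{\I\circ\H^\infty} = \|T_f\|_\I$ by two inequalities. First, from any representation $f = T \circ g$, the argument above yields $T_f = T \circ T_g$ with $\|T_g\| = \|g\|_\infty$ (isometric linearization), so the ideal-norm inequality $\|T_f\|_\I \leq \|T\|_\I \|T_g\| = \|T\|_\I \|g\|_\infty$ combined with infimum over representations gives $\|T_f\|_\I \leq \|f\|_{\I\circ\H^\infty}$. Conversely, the Mujica factorization $f = T_f \circ g_U$ is itself a valid representation, and since $\|g_U\|_\infty \leq 1$, it yields $\|f\|_{\I\circ\H^\infty} \leq \|T_f\|_\I \cdot 1$; this simultaneously shows the infimum is attained at Mujica's factorization.

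Finally, the isometric isomorphism statement follows by combining the equivalence (1)$\Leftrightarrow$(2) (which shows $f \mapsto T_f$ restricts to a bijection between $\I\circ\H^\infty(U,F)$ and $\I(\G^\infty(U),F)$) with the norm identity $\|f\|_{\I\circ\H^\infty} = \|T_f\|_\I$. Linearity is inherited from the linearity of the full Mujica isomorphism $\H^\infty(U,F) \to \L(\G^\infty(U),F)$.

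I do not expect a substantive obstacle here; the whole argument is essentially bookkeeping on top of Mujica's theorem. The only subtle point worth spelling out carefully is the use of uniqueness in Theorem \ref{teo0}(3) to identify $T \circ T_g$ with $T_f$, since without this identification one could only conclude that some linearization belongs to $\I$, not that $T_f$ itself does.
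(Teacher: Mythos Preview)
Your proposal is correct and follows essentially the same route as the paper: both directions and the norm inequalities are argued exactly as the authors do, the only cosmetic difference being that you invoke the uniqueness clause of Theorem~\ref{teo0}(3) to identify $T\circ T_g$ with $T_f$, whereas the paper phrases this via the linear denseness of $g_U(U)$ in $\G^\infty(U)$ (these are equivalent). Your treatment of the final isometric isomorphism is slightly more explicit than the paper's, which simply remarks that it follows from the preceding computations and Theorem~\ref{teo0}.
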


\begin{proof}
$(1)\Rightarrow(2)$: If $f\in\I\circ\H^\infty(U,F)$, then there are a complex Banach space $G$, a mapping $g\in\H^\infty(U,G)$ and an operator $T\in\I(G,F)$ such that $f=T\circ g$. Since $f=T_f\circ g_U$ and $g=T_g\circ g_U$ by Theorem \ref{teo0}, it follows that $T_f\circ g_U=T\circ T_g\circ g_U$ which implies that $T_f=T\circ T_g$ by the linear denseness of $ g_U(U)$ in $\G^\infty(U)$, and thus $T_f\in\I(\G^\infty(U),F)$ by the ideal property of $\I$. Further, if $(\I,\left\|\cdot\right\|_\I)$ is normed, we have
$$
\left\|T_f\right\|_\I=\left\|T\circ T_g\right\|_\I\leq\left\|T\right\|_\I\left\|T_g\right\|=\left\|T\right\|_\I\left\|g\right\|_\infty, 
$$
and taking the infimum over all representations of $f$, we deduce that $\left\|T_f\right\|_\I\leq\left\|f\right\|_{\I\circ\H^\infty}$. 

$(2)\Rightarrow(1)$: If $T_f\in\I(\G^\infty(U),F)$, then $f=T_f\circ g_U\in\I\circ\H^\infty(U,F)$ since $\G^\infty(U)$ is a complex Banach space and $ g_U\in\H^\infty(U,\G^\infty(U))$ by Theorem \ref{teo0}. Moreover, if $(\I,\left\|\cdot\right\|_\I)$ is normed, we have 
$$
\left\|f\right\|_{\I\circ\H^\infty}=\left\|T_f\circ g_U\right\|_{\I\circ\H^\infty}\leq\left\|T_f\right\|_\I\left\| g_U\right\|_\infty=\left\|T_f\right\|_\I. 
$$

Finally, the last assertion of the statement easily follows by applying the above proof and Theorem \ref{teo0}.
\end{proof}

We now see that some properties of the operator ideal $\I$ are transferred to the composition ideal $\I\circ\H^\infty$.

\begin{corollary}\label{new2}
If $\I$ is a closed (resp., normed, Banach) operator ideal, then $\I\circ\H^\infty$ is a closed (resp., normed, Banach) bounded-holomorphic ideal.
\end{corollary}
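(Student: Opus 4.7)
The plan is to exploit the isometric isomorphisms established in Theorems~\ref{teo0} and \ref{ideal} to transfer the relevant properties from $\I$ to $\I\circ\H^\infty$. The key observation is that, for each choice of $U$ and $F$, the linearization $f\mapsto T_f$ is an isometric isomorphism from $(\H^\infty(U,F),\|\cdot\|_\infty)$ onto $\L(\G^\infty(U),F)$ that restricts to an isometric bijection from $(\I\circ\H^\infty(U,F),\|\cdot\|_{\I\circ\H^\infty})$ onto $(\I(\G^\infty(U),F),\|\cdot\|_\I)$. Thus any topological or metric property stated componentwise on $\I$ transfers immediately to $\I\circ\H^\infty$.

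First I would verify that $\I\circ\H^\infty$ is a bounded-holomorphic ideal in the sense of Definition~\ref{def-ideal}. Linearity of each component $\I\circ\H^\infty(U,F)$ follows from the linearity of $f\mapsto T_f$ and from $\I(\G^\infty(U),F)$ being a linear subspace of $\L(\G^\infty(U),F)$. For (I2), the elementary map $g\cdot y$ factors as $R\circ g$ with $R\in\L(\C,F)$ given by $R(\lambda)=\lambda y$, a rank-one operator, hence in $\I$. For the ideal property (I3), given $h\in\H(V,U)$, $f=T\circ \tilde{g}\in\I\circ\H^\infty(U,F)$ and $S\in\L(F,G)$, one has $S\circ f\circ h=(S\circ T)\circ(\tilde{g}\circ h)$ with $S\circ T\in\I$ and $\tilde{g}\circ h$ bounded holomorphic.

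For the closed case, if $\I$ is closed then $\I(\G^\infty(U),F)$ is a closed subspace of $\L(\G^\infty(U),F)$; since $f\mapsto T_f$ is an isometric isomorphism between $\H^\infty(U,F)$ and $\L(\G^\infty(U),F)$ by Theorem~\ref{teo0}(4), its preimage $\I\circ\H^\infty(U,F)=\{f\in\H^\infty(U,F):T_f\in\I(\G^\infty(U),F)\}$ is closed in $(\H^\infty(U,F),\|\cdot\|_\infty)$. For the normed and Banach cases, Theorem~\ref{ideal} provides the isometric bijection $(\I\circ\H^\infty(U,F),\|\cdot\|_{\I\circ\H^\infty})\cong(\I(\G^\infty(U),F),\|\cdot\|_\I)$, so the former inherits the normed or Banach structure from the latter. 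The axioms (N1)--(N3) of Definition~\ref{def-ideal} then follow routinely via the linearization: (N1) from $\|f\|_\infty=\|T_f\|\leq\|T_f\|_\I=\|f\|_{\I\circ\H^\infty}$; (N2) from the rank-one linearization $T_{g\cdot y}:\gamma\mapsto\gamma(g)y$ having $\I$-norm equal to $\|J_U(g)\|\,\|y\|=\|g\|_\infty\|y\|$; and (N3) by writing $S\circ f\circ h=(S\circ T_f)\circ(g_U\circ h)$ via Mujica's factorization and using $\|g_U\circ h\|_\infty\leq 1$ (since $g_U(U)\subseteq S_{\G^\infty(U)}$) together with $\|S\circ T_f\|_\I\leq\|S\|\,\|T_f\|_\I$.

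There is no real obstacle: the corollary reduces entirely to the corresponding facts for the operator ideal $\I$ via the isometric identification of Theorem~\ref{ideal}. The only step requiring mild care is axiom (N3), where the nonlinearity of $h$ prevents a direct linearization of $f\circ h$; instead one must route the composition through the canonical factorization $f=T_f\circ g_U$, exploiting the uniform bound $\|g_U\|_\infty\leq 1$ to absorb the map $h$ without penalty.
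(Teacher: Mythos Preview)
Your proposal is correct and follows the same linearization strategy as the paper: both arguments transfer the ideal axioms and the closed/normed/Banach properties from $\I$ to $\I\circ\H^\infty$ via the isometric identification $f\mapsto T_f$ of Theorems~\ref{teo0} and~\ref{ideal}. The only noteworthy difference is in (I3) and (N3): the paper invokes the induced linear operator $\widehat{h}\in\L(\G^\infty(V),\G^\infty(U))$ with $\widehat{h}\circ g_V=g_U\circ h$ and $\|\widehat{h}\|=1$ (from \cite[Corollary~1.4]{JimRuiSep-22}) to write $S\circ f\circ h=(S\circ T_f\circ\widehat{h})\circ g_V$, whereas you bypass this by using the factorization $S\circ f\circ h=(S\circ T_f)\circ(g_U\circ h)$ directly and bounding $\|g_U\circ h\|_\infty\leq 1$. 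Your route is slightly more elementary since it avoids the auxiliary result on $\widehat{h}$; the paper's route has the advantage of exhibiting the Mujica factorization of $S\circ f\circ h$ explicitly (i.e., identifying $T_{S\circ f\circ h}=S\circ T_f\circ\widehat{h}$), which is sometimes useful elsewhere.
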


\begin{proof}
Let $\I$ be an operator ideal. We have:
\begin{enumerate}
\item[(I1)] If $\alpha_1,\alpha_2\in\mathbb{C}$ and $f_1,f_2\in\I\circ\H^\infty(U,F)$, then $T_{\alpha_1f_1+\alpha_2f_2}=\alpha_1T_{f_1}+\alpha_2T_{f_1}\in\I(\G^\infty(U),F)$ by Theorems \ref{teo0} and \ref{ideal}. Hence $\alpha_1f_1+\alpha_2f_2\in\I\circ\H^\infty(U,F)$ by Theorem \ref{ideal}. Therefore $\I\circ\H^\infty(U,F)$ is a linear subspace of $\H^\infty(U,F)$. 
\item[(I2)] Given $g\in\H^\infty(U)$ and $y\in F$, we can write $g\cdot y=T_{g\cdot y}\circ g_U$, where $ g_U\in\H^\infty(U,G^\infty(U))$ and $T_{g\cdot y}\in\F(G^\infty(U),F)$ by Theorem \ref{teo0} and \cite[p. 872]{Muj-91}, and this tells us that $g\cdot y\in \F\circ\H^\infty(U,F)$ and since always $\F\subseteq\I$, we conclude that $g\cdot y\in\I\circ\H^\infty(U,F)$. 
\item[(I3)] Let $H,G$ be complex Banach spaces, $V$ be an open subset of $H$, $h\in\H(V,U)$, $f\in\I\circ\H^\infty(U,F)$ and $S\in\L(F,G)$. By \cite[Corollary 1.4]{JimRuiSep-22}, there exists a unique operator $\widehat{h}\in\L(\G^\infty(V),\G^\infty(U))$ such that $\widehat{h}\circ g_V=g_U\circ h$. Furthermore, $||\widehat{h}||=1$. Since  
$$
S\circ f\circ h=S\circ(T_f\circ g_U)\circ h=(S\circ T_f\circ\widehat{h})\circ g_V,
$$
with $S\circ T_f\circ\widehat{h}\in\I(\G^\infty(V),G)$ and $g_V\in\H^\infty(V,\G^\infty(V))$, we have $S\circ f\circ h\in\I\circ\H^\infty(V,G)$.
\end{enumerate}
This proves that $\I\circ\H^\infty$ is a bounded-holomorphic ideal.

We now show that $\I\circ\H^\infty(U,F)$ is closed whenever $\I$ is so. Let $f\in\I\circ\H^\infty(U,F)$ and let $(f_n)_{n\in\N}$ be a sequence in $\I\circ\H^\infty(U,F)$ such that $\left\|f_n-f\right\|_\infty\to 0$ as $n\to\infty$. Since $T_{f_n}\in\I(G^\infty(U),F)$ by Theorem \ref{ideal} and $\left\|T_{f_n}-T_f\right\|=\left\|T_{f_n-f}\right\|=\left\|f_n-f\right\|_\infty$ for all $n\in\N$, we have that $T_f\in\I(\G^\infty(U),F)$, and thus $f\in\I\circ\H^\infty(U,F)$ by Theorem \ref{ideal}.

Assume now that the operator ideal $(\I,\left\|\cdot\right\|_\I)$ is normed. We have:
\begin{enumerate}
\item[(N1)] Since $\left\|f\right\|_{\I\circ\H^\infty}=\left\|T_f\right\|_\I$ for all $f\in\I\circ\H^\infty(U,F)$ by Theorem \ref{ideal}, it easily follows that $\left\|\cdot\right\|_{\I\circ\H^\infty}$ is a norm on $\I\circ\H^\infty(U,F)$ and 
$$
\left\|f\right\|_\infty=\left\|T_f\right\|\leq \left\|T_f\right\|_\I=\left\|f\right\|_{\I\circ\H^\infty}
$$ 
for all $f\in\I\circ\H^\infty(U,F)$. 
\item[(N2)] Given $g\in\H^\infty(U)$ and $y\in F$, we have 
$$
\left\|g\right\|_\infty\left\|y\right\|=\left\|g\cdot y\right\|_\infty\leq \left\|g\cdot y\right\|_{\I\circ\H^\infty},
$$
and conversely, since $g\cdot y=M_y\circ g$ where $M_y\in\F(\mathbb{C},F)\subseteq\I(\mathbb{C},F)$ is the operator defined $M_y(\lambda)=\lambda y$ for all $\lambda\in\mathbb{C}$, we have
$$
\left\|g\cdot y\right\|_{\I\circ\H^\infty}\leq\left\|g\right\|_\infty\left\|M_y\right\|=\left\|g\right\|_\infty\left\|y\right\|.
$$
\item[(N3)] Following the above proof of (I3), we have 
\begin{align*}
\left\|S\circ f\circ h\right\|_{\I\circ\H^\infty}&=\left\|(S\circ T_f\circ\widehat{h})\circ g_V\right\|_{\I\circ\H^\infty}\leq \left\|S\circ T_f\circ\widehat{h}\right\|_\I \left\|g_V\right\|_\infty\\
																								 &\leq \left\|S\right\|\left\|T_f\right\|_\I\left\|\widehat{h}\right\|\left\|g_V\right\|_\infty=\left\|S\right\|\left\|f\right\|_{\I\circ\H^\infty}.
\end{align*}
\end{enumerate}
So, we have proved that the ideal $(\I\circ\H^\infty,\left\|\cdot\right\|_{\I\circ\H^\infty})$ is normed.  

Finally, since $(\I\circ\H^\infty(U,F),\left\|\cdot\right\|_{\I\circ\H^\infty})$ is isometrically isomorphic to $(\I(\G^\infty(U),F),\left\|\cdot\right\|_\I)$ by Theorem \ref{ideal}, then $(\I\circ\H^\infty,\left\|\cdot\right\|_{\I\circ\H^\infty})$ is a Banach ideal whenever $(\I,\left\|\cdot\right\|_\I)$ is so. 
\end{proof}

We finish this section with some properties of bounded-holomorphic ideals which can be easily deduced from Theorems \ref{teo0} and \ref{ideal}.

\begin{proposition}
Let $\I$ and $\J$ be two operator ideals. We have:
\begin{enumerate}
	\item If $\I\circ\H^\infty(U,F)\subseteq\J\circ\H^\infty(U,F)$, then $\I(G^\infty(U),F)\subseteq\J(G^\infty(U),F)$.
	\item If $\I\circ\H^\infty(U,F)=\H^\infty(U,F)$, then $\I(G^\infty(U),F)=\L(G^\infty(U),F)$.
	\item If the identity operator $\mathrm{id}_F\in\I(F,F)$, then $\I\circ\H^\infty(U,F)=\H^\infty(U,F)$. $\hfill\qed$
\end{enumerate}
\end{proposition}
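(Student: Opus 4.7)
The plan is to reduce each part to the correspondence $f \leftrightarrow T_f$ established by Theorem \ref{teo0} and the characterisation $f \in \I\circ \H^\infty(U,F) \Longleftrightarrow T_f \in \I(\G^\infty(U),F)$ from Theorem \ref{ideal}. The central observation, used in (1) and (2), is that if one starts with an operator $T \in \L(\G^\infty(U),F)$ and sets $f := T \circ g_U$, then uniqueness in Theorem \ref{teo0}(3) forces $T_f = T$, so facts about $f$ translate directly into facts about $T$.

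For part (1), I would take $T \in \I(\G^\infty(U),F)$ and form $f := T \circ g_U$, which lies in $\H^\infty(U,F)$ since $g_U \in \H^\infty(U,\G^\infty(U))$ by Theorem \ref{teo0}(2). By construction and uniqueness, $T_f = T$, so Theorem \ref{ideal} gives $f \in \I \circ \H^\infty(U,F)$. The hypothesis then places $f$ in $\J \circ \H^\infty(U,F)$, and applying Theorem \ref{ideal} in the other direction yields $T = T_f \in \J(\G^\infty(U),F)$.

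Part (2) follows along the same lines. The inclusion $\I(\G^\infty(U),F) \subseteq \L(\G^\infty(U),F)$ is automatic. For the converse, take $T \in \L(\G^\infty(U),F)$, set $f := T \circ g_U \in \H^\infty(U,F)$, and invoke the hypothesis to get $f \in \I\circ\H^\infty(U,F)$; Theorem \ref{ideal} then returns $T = T_f \in \I(\G^\infty(U),F)$. Equivalently, this is just (1) specialised to $\J = \L$ together with the trivial identity $\L \circ \H^\infty(U,F) = \H^\infty(U,F)$.

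Part (3) is the most direct: every $f \in \H^\infty(U,F)$ admits the trivial factorisation $f = \mathrm{id}_F \circ f$, which witnesses $f \in \I \circ \H^\infty(U,F)$ as soon as $\mathrm{id}_F \in \I(F,F)$; the reverse inclusion is built into the definition of $\I \circ \H^\infty(U,F)$. No real obstacle arises in any of the three items, since Theorem \ref{ideal} has already done the conceptual work: the only ingredient to keep in mind is the uniqueness of the linearisation $T_f$, which lets one pass freely between the holomorphic and the linear sides of the correspondence.
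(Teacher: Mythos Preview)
Your argument is correct and is precisely the route the paper intends: the proposition is stated with a $\qed$ and the remark that it ``can be easily deduced from Theorems \ref{teo0} and \ref{ideal}'', i.e.\ from the bijection $f\leftrightarrow T_f$ and the equivalence $f\in\I\circ\H^\infty(U,F)\Leftrightarrow T_f\in\I(\G^\infty(U),F)$, which is exactly what you use.
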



\section{Examples of composition ideals of bounded holomorphic mappings}\label{section 3}

We have divided this section into four parts. In Subsection \ref{subsection 1}, we will recall that some known ideals of holomorphic mappings $f$ with local range of bounded type (for example, with  compact, weakly compact, Rosenthal or Asplund range) are generated by composition of a holomorphic mapping $g$ with an operator $T$ in the corresponding operator ideal $\I$. 

If $f$ is in addition bounded, one cannot assure in general that the function $g$ is also bounded. However, this is possible if we consider some smaller classes of such ideals. To be more precise, in Subsection \ref{subsection 2} we will show that the ideals of bounded holomorphic mappings with (global) range of bounded type are generated by composition of a bounded holomorphic mapping $g$ with an operator $T$ in $\I$. 

Finally, motivated by the ideals of $p$-integral operators and $p$-nuclear operators between Banach spaces for $1\leq p<\infty$ (see, for example, \cite{DisJarTon-95}), we will introduce in Subsections \ref{subsection 3} and \ref{subsection 4} the analogs in the holomorphic setting and state that such ideals of bounded holomorphic mappings are generated by the method of composition.

\subsection{Holomorphic mappings with local range of bounded type}\label{subsection 1}


Let $\H_{k}(U,F)$ (resp., $\H_{w}(U,F)$, $\H_{r}(U,F)$, $\H_{a}(U,F)$) denote the linear subspace of all locally compact (resp., locally weakly compact, locally Rosenthal, locally Asplund) mappings of $\H(U,F)$. In the bounded case, we write
$$
\H^\infty_i(U,F)=\H_i(U,F)\cap\H^\infty(U,F)\qquad (i=k,w,r,a).
$$ 
It is clear that 
$$
\H_k(U,F)\subseteq\H_w(U,F)\subseteq\H_r(U,F)\cap\H_a(U,F).
$$

\begin{proposition}\label{Hi}
For $i=k,w,r,a$, $\H_i$ is a holomorphic ideal and $\H^\infty_i$ is a bounded-holomorphic ideal.
\end{proposition}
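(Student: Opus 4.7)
The plan is to verify directly the three defining axioms (I1), (I2), (I3) of Definition \ref{def-ideal} for the class $\H_i$ with $i\in\{k,w,r,a\}$; the second assertion, that $\H^\infty_i$ is a bounded-holomorphic ideal, is then immediate from the definition, since $\H^\infty_i = \H_i \cap \H^\infty$.

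The driving observation will be the following standard stability property: in any complex Banach space, each of the four classes (relatively compact, relatively weakly compact, Rosenthal, and Asplund subsets) is closed under passage to subsets, scalar multiples, finite sums, and bounded linear images. The compact and weakly compact cases are textbook; for Rosenthal sets, the image stability follows from the fact that a bounded linear map is weakly continuous and hence preserves weakly Cauchy sequences (combined with Rosenthal's $\ell_1$-theorem); for Asplund sets, one uses the characterization via the closed linear span having separable dual on every countable subset, which is preserved by bounded linear operators.

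With this in hand, the axioms will follow routinely. For (I1), given $f_1, f_2 \in \H_i(U,F)$, scalars $\alpha_1,\alpha_2$, and $x\in U$, intersecting the two neighborhoods furnished by the local property of each $f_j$ and using $(\alpha_1 f_1 + \alpha_2 f_2)(U_x) \subseteq \alpha_1 f_1(U_x) + \alpha_2 f_2(U_x)$ does the job. For (I2), given $g\in\H(U)$ and $y\in F$, continuity of $g$ at any $x\in U$ provides a neighborhood $U_x$ on which $|g|\leq M$, so $(g\cdot y)(U_x) \subseteq \{\lambda y : |\lambda|\leq M\}$ is norm-compact in $F$ and hence lies in all four classes. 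For (I3), given $h\in\H(V,U)$, $f\in\H_i(U,F)$, $S\in\L(F,G)$, and $v\in V$, pick a neighborhood $U_{h(v)}$ of $h(v)$ where $f$ has the relevant local property and then, using continuity of $h$, a neighborhood $V_v$ of $v$ with $h(V_v)\subseteq U_{h(v)}$; the inclusion $(S\circ f\circ h)(V_v) \subseteq S(f(U_{h(v)}))$ then places the image in the class by the image stability.

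The only slightly delicate point, and hence the main obstacle, is the bounded-linear-image stability for Rosenthal and Asplund sets, which is less ubiquitous than its (weakly) compact counterpart but standard in the literature. Everything else is mechanical once the four set-theoretic stability facts are quoted.
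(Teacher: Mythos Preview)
Your proposal is correct and follows essentially the same route as the paper's proof: a direct verification of (I1)--(I3) using that the four set classes are stable under subsets, finite sums, scalar multiples, and bounded linear images, with (I2) reduced to the compact case via local boundedness of scalar holomorphic functions. The paper is terser (it dismisses (I1) with ``clearly'' and does not spell out the stability properties for Rosenthal and Asplund sets as you do), but the argument is the same.
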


\begin{proof}
Let $i=k,w,r,a$. Clearly, $\H_i(U,F)$ is a linear subspace of $\H(U,F)$. Given $g\in\H(U)$ and $y\in F$, it is clear that $g\cdot y\in\H(U,F)$ with $(g\cdot y)(U)=g(U)y$. Since $g$ is locally bounded by \cite[Lemma 5.6]{Muj-86}, every point $x\in U$ has a neighborhood $U_x\subseteq U$ such that $g(U_x)$ is bounded in $\C$, that is, relatively compact in $\C$. Hence $(g\cdot y)(U_x)=g(U_x)y$ is relatively compact in $F$ and thus $g\cdot y\in\H_k(U,F)\subseteq\H_i(U,F)$. 

To prove the ideal property of $\H_i(U,F)$, let $H,G$ be complex Banach spaces, $V$ be an open subset of $H$, $h\in\H(V,U)$, $f\in\H_i(U,F)$ and $S\in\L(F,G)$. Let $x\in V$. Then there exists a neighborhood of $h(x)$, $U_{h(x)}\subseteq U$, such that $f(U_{h(x)})$ is relatively compact (resp., relatively weakly compact, Rosenthal, Asplund) in $F$. Denote $V_x=h^{-1}(U_{h(x)})$. Hence $S(f(h(V_x))$ is relatively compact (resp., relatively weakly compact, Rosenthal, Asplund) in $G$, and thus $S\circ f\circ h\in\H_i(V,G)$. This proves that $\H_i$ is a holomorphic ideal. Hence $\H^\infty_i$ is a bounded-holomorphic ideal. 

\end{proof}

Some known results show that the holomorphic ideals $\H_i$ for $i=k,w,r,a$ are generated by the method of composition with an operator ideal. Namely, we have
\begin{align*}
\K\circ\H(E,F)=H_k(E,F)\qquad&\text{\cite{AroSch-76}},\\
\W\circ\H(E,F)=H_w(E,F)\qquad&\text{\cite{Rya-88}},\\
\Ro\circ\H(E,F)=H_r(E,F)\qquad&\text{\cite{Lin-89}},\\
\A\circ\H(E,F)=H_a(E,F)\qquad&\text{\cite{Rob-92}}.
\end{align*}
More generally, let $\U$ be a closed surjective operator ideal and let $\mathcal{C}_{\U}(F)$ be the collection of all $A\subseteq F$ so that $A\subseteq T(B_G)$ for some complex Banach space $G$ and some operator $T\in\U(G,F)$. Let $H^\U(E,F)$ denote the space of all $f\in\H(E,F)$ such that each $x\in E$ has a neighborhood $V_x\subseteq E$ with $f(V_x)\in\mathcal{C}_\U(F)$. In \cite[Theorem 6]{GonGut-00}, it is proved that
$$
\U\circ\H(E,F)=H^\U(E,F).
$$
See also the paper \cite{AroBotPelRue-10} for a study of such spaces as associated to composition ideals of polynomials.

\subsection{Holomorphic mappings with range of bounded type}\label{subsection 2}


Let $\H^\infty_{\K}(U,F)$ (resp., $\H^\infty_{\W}(U,F)$, $\H^\infty_{\Ro}(U,F)$, $\H^\infty_{\A}(U,F)$) denote the linear space of all holomorphic mappings $f\colon U\to F$ such that $f(U)$ is relatively compact (resp., relatively weakly compact, Rosenthal, Asplund) in $F$. Note that $f(U)$ is a bounded subset of $F$ in all the cases. 

We will also consider the spaces: 
\begin{align*}
	\H^\infty_{\F}(U,F)&=\left\{f\in\H^\infty(U,F)\colon \lin(f(U))\text{ is finite-dimensional in }F\right\},\\
	\H^\infty_{\overline{\F}}(U,F)&=\left\{f\in\H^\infty(U,F)\colon \exists (f_n)_n^\infty\subseteq\H_\F^\infty(U,F)\;|\; \left\|f_n-f\right\|_\infty\to 0\right\},\\
	\H^\infty_{\S}(U,F)&=\left\{f\in\H^\infty(U,F)\colon f(U)\text{ is separable in }F\right\}.
\end{align*}
Clearly, we have the inclusions:
\begin{align*}
\H^\infty_{\F}(U,F)&\subseteq\H^\infty_{\overline{\F}}(U,F)\subseteq\H^\infty_\K(U,F)\subseteq\H^\infty_\W(U,F)\subseteq\H^\infty_\Ro(U,F)\cap\H^\infty_\A(U,F),\\
\H^\infty_{\K}(U,F)&\subseteq\H^\infty_{\S}(U,F),
\end{align*}
and
$$
\H^\infty_{\I}(U,F)\subseteq\H^\infty_{i}(U,F)\qquad ((\I,i)=(\K,k),(\W,w),(\Ro,r),(\A,a)).
$$

\begin{proposition}\label{HI}
For $\I=\F,\K,\S,\overline{\F},\W,\Ro,\A$, the set $\H^\infty_\I$ is a bounded-holomorphic ideal. Furthermore, we have:
\begin{enumerate}
	\item $\H^\infty_{\I}$ is not closed if and only if $\I=\F$. 
	\item $\H^\infty_{\I}$ is injective and surjective whenever $\I=\K,\S,\W,\Ro,\A$.
\end{enumerate}
\end{proposition}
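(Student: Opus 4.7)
My plan is to verify, for each class $\I\in\{\F,\K,\S,\overline{\F},\W,\Ro,\A\}$, the three axioms of Definition \ref{def-ideal} for $\H^\infty_\I$, and then separately analyze closedness and the injective/surjective properties. Axioms (I1) and (I3) reduce to stability of each underlying set class under Minkowski sums and scalar multiples, and under continuous linear images, respectively; both facts are classical for every one of the classes under consideration (compact, separable, weakly compact, Rosenthal, Asplund, finite-dimensional linear span, and its norm-closure). Axiom (I2) is essentially trivial, since $(g\cdot y)(U)\subseteq \C y$ gives that $\lin((g\cdot y)(U))$ is one-dimensional, so $g\cdot y\in\H^\infty_\F$, which is contained in every $\H^\infty_\I$ under consideration. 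For (I3) one also needs $S\circ f\circ h$ to be bounded, which follows from $(S\circ f\circ h)(V)\subseteq S(f(U))$ together with the fact that every relatively $\I$-set is norm-bounded.

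For statement (1), the case $\I=\overline{\F}$ is closed by its very definition. For $\I=\K$ and $\I=\S$, a uniform-convergence $\varepsilon/2$ argument, writing $f(U)\subseteq f_N(U)+\varepsilon B_F$ and using total boundedness, respectively separability, of $f_N(U)$, shows that $f(U)$ inherits the property. For $\I=\W,\Ro,\A$, the key point is that uniform limits of mappings with relatively weakly compact, respectively Rosenthal, respectively Asplund, range again have range of the same type; I would prove this by a diagonal-subsequence argument combined with the Eberlein--Smulian theorem for $\W$, and with the sequential characterizations of Rosenthal sets (via Rosenthal's $\ell^1$-theorem) and of Asplund sets. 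This is the main obstacle of the proof, since it uses the structural theorems for each set class rather than any soft composition argument. Non-closedness of $\H^\infty_\F$ would then follow from the explicit example $f(z)=\sum_{n\geq 1} z^n e_n/n!$ from $\D$ into $\ell^2$, which is the uniform limit of its finite-rank partial sums but whose range spans an infinite-dimensional subspace, by a Vandermonde-type argument applied to any finite collection of distinct nonzero scalars.

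For statement (2), injectivity is essentially tautological once we observe that an isometric embedding $\iota\colon F\to G$ identifies $F$ with a closed subspace of $G$; the restriction of the weak topology of $G$ to $\iota(F)$ coincides with the weak topology of $F$, so $\iota$ both preserves and reflects relative (weak) compactness, separability, and the Rosenthal and Asplund set properties. Thus $f(U)$ has property $\I$ in $F$ if and only if $\iota(f(U))$ has it in $G$. Surjectivity is even simpler: if $\pi\in\H(V,U)$ is surjective then $(f\circ\pi)(V)=f(\pi(V))=f(U)$, so the hypothesis on $f\circ\pi$ transfers verbatim to $f$. I expect no real difficulty in this part, and the argument reduces to recording the hereditary behavior of each $\I$-class under isometric inclusions and under surjections of the parameter domain.
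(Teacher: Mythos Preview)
Your approach is correct but differs substantively from the paper's. For both (1) and (2) the paper does not argue directly with the ranges $f(U)$; instead it invokes the isometric isomorphism $f\mapsto T_f$ from $\H^\infty_\I(U,F)$ onto $\I(\G^\infty(U),F)$ (established in \cite{Muj-91} and \cite{JimRuiSep-22}) and then simply cites the corresponding properties---closedness, injectivity, surjectivity---of the \emph{linear} operator ideals $\K,\S,\W,\Ro,\A$, together with the non-closedness of $\F$. This reduces everything in one stroke to known facts about operator ideals and completely bypasses the case-by-case work you flag as the ``main obstacle'' for $\W,\Ro,\A$; your diagonal/Eberlein--\v{S}mulian and sequential-characterization arguments do go through, but each one needs a genuine structural theorem (Eberlein--\v{S}mulian, Rosenthal's $\ell_1$-theorem, the fragmentability description of Asplund sets) as input. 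Conversely, your direct route is more elementary and actually cleaner for part~(2): the observation that $(f\circ\pi)(V)=f(U)$ when $\pi$ is onto, together with the fact that an isometric embedding reflects each of the set properties in question, settles surjectivity and injectivity immediately, whereas the paper detours through $T_{\iota\circ f}=\iota\circ T_f$ and $T_{f\circ\pi}=T_f\circ\widehat{\pi}$ and then appeals to injectivity/surjectivity of the linear ideals. In short, the linearization approach buys uniformity and brevity for closedness, while your range-based approach buys transparency (and independence from the $\G^\infty(U)$ machinery) for the injective and surjective properties.
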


\begin{proof}
The first assertion follows with a proof similar to that of Proposition \ref{Hi}. Applying that the mapping $f\mapsto T_f$ is an isometric isomorphism from $\H^\infty_{\I}(U,F)$ onto $\I(\G^\infty(U),F)$ (see \cite[Propositions 3.1 and 3.4]{Muj-91} for $\I=\F,\K,\W$ and \cite[Corollary 2.11]{JimRuiSep-22} for $\I=\S,\Ro,\A$), and that $\I$ is not closed for the operator norm if and only if $\I=\F$, we deduce the equivalence in $(1)$.  

Let $\I=\K,\S,\W,\Ro,\A$ and $f\in\H^\infty(U,F)$. On the one hand, assume that $\iota\circ f\in\H^\infty_{\I}(U,G)$ for any complex Banach space $G$ and any isometric linear embedding $\iota\colon F\to G$. Since $\iota\circ T_f=T_{\iota\circ f}\in\I(\G^\infty(U),G)$ by \cite[Propositions 3.1 and 3.4]{Muj-91} and \cite[Corollary 2.11]{JimRuiSep-22}, and the operator ideal $\I$ is injective (see, for example, \cite[p. 471]{GonGut-00}), it follows that $T_f\in\I(\G^\infty(U),F)$, thus $f\in\H_{\I}^\infty(U,F)$ by \cite[Propositions 3.1 and 3.4]{Muj-91} and \cite[Corollary 2.11]{JimRuiSep-22}, and this proves that $\H^\infty_{\I}$ is injective. 

On the other hand, suppose that $f\circ \pi\in\I^{\H^\infty}(V,F)$, where $V$ is an open subset of a complex Banach space $G$ and $\pi\in\H(V,U)$ is surjective. By \cite[Corollary 1.4]{JimRuiSep-22}, there exists a unique operator $\widehat{\pi}\in\L(\G^\infty(V),\G^\infty(U))$ such that $g_U\circ\pi=\widehat{\pi}\circ g_V$. Since $T_f\circ \widehat{\pi}\in\L(\G^\infty(V),F)$ and $T_f\circ\widehat{\pi}\circ g_V=T_f\circ g_U\circ\pi=f\circ\pi$, we deduce that $T_{f\circ\pi}=T_f\circ\widehat{\pi}$ by Theorem \ref{teo0}. Since $T_f\circ\widehat{\pi}=T_{f\circ \pi}\in\I(\G^\infty(V),F)$ by \cite[Propositions 3.1 and 3.4]{Muj-91} and \cite[Corollary 2.11]{JimRuiSep-22}, and the operator ideal $\I$ is surjective \cite[p. 471]{GonGut-00}, we have that $T_f\in\I(\G^\infty(U),F)$, hence $f\in\H_{\I}^\infty(U,F)$ by \cite[Propositions 3.1 and 3.4]{Muj-91} and \cite[Corollary 2.11]{JimRuiSep-22}, and thus $\H^\infty_{\I}$ is surjective.
\end{proof}

We next see that the preceding bounded-holomorphic ideals are generated by composition with the corresponding operator ideal. 

\begin{proposition}\label{new}
For $\I=\F,\overline{\F},\K,\S,\W,\Ro,\A$, we have $\H^\infty_{\I}=\I\circ\H^\infty$ and $\left\|f\right\|_\infty=\left\|f\right\|_{\I\circ\H^\infty}$ for all $f\in\H^\infty_{\I}$. 
\end{proposition}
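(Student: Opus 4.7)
The key idea is that both $\H^\infty_{\I}(U,F)$ and $\I \circ \H^\infty(U,F)$ can be identified isometrically with $\I(\G^\infty(U), F)$ via the Mujica linearization $f \mapsto T_f$. The plan is to establish this common identification on the $\H^\infty_\I$ side and then combine it with the analogous identification for composition ideals already provided by Theorem \ref{ideal}.

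First I would combine Theorem \ref{teo0} with the factorization $f = T_f \circ g_U$ to show that, for each of the classes $\I \in \{\F, \overline{\F}, \K, \S, \W, \Ro, \A\}$, the linearization $f \mapsto T_f$ restricts to an isometric isomorphism from $(\H^\infty_\I(U,F), \|\cdot\|_\infty)$ onto $(\I(\G^\infty(U), F), \|\cdot\|)$. For $\I = \F, \K, \W$ and $\I = \S, \Ro, \A$ this is exactly what is invoked in the proof of Proposition \ref{HI} (via \cite[Propositions 3.1, 3.4]{Muj-91} and \cite[Corollary 2.11]{JimRuiSep-22}, respectively). The case $\I = \overline{\F}$ is not explicitly recorded there; I would derive it by an approximation argument, using that $\|T_{f_n} - T_f\| = \|f_n - f\|_\infty$ so that uniform limits of finite-rank-range bounded holomorphic mappings correspond precisely to operator-norm limits of finite-rank linearizations, thereby matching the definition of $\H^\infty_{\overline{\F}}$ with $\overline{\F}(\G^\infty(U),F)$.

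Next I would invoke Theorem \ref{ideal}, which provides an isometric isomorphism $f \mapsto T_f$ from $(\I \circ \H^\infty(U,F), \|\cdot\|_{\I \circ \H^\infty})$ onto $(\I(\G^\infty(U), F), \|\cdot\|_\I)$, where $\|\cdot\|_\I$ denotes the natural ideal norm. Crucially, for each of the seven operator ideals in the statement the ideal norm coincides with the operator norm inherited from $\L$, since they are all (closed) subspaces of $\L$ endowed with the operator norm. Composing the two identifications thus yields the set equality $\H^\infty_\I(U,F) = \I \circ \H^\infty(U,F)$ together with
\[
\|f\|_\infty = \|T_f\| = \|T_f\|_\I = \|f\|_{\I \circ \H^\infty} \qquad (f \in \H^\infty_\I(U,F)),
\]
which is exactly the claimed isometric equality.

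The main obstacle I anticipate is the handling of $\I = \overline{\F}$, since it is not covered verbatim by the cited linearization results, and also a careful justification that each ideal norm $\|\cdot\|_\I$ really does reduce to the operator norm for the list of $\I$ under consideration. Both points are routine once one unravels the definitions: the former by the approximation argument sketched above, the latter by observing that none of $\F, \overline{\F}, \K, \S, \W, \Ro, \A$ carries a stronger canonical norm. Beyond these two observations, the argument is essentially a bookkeeping of two isometries.
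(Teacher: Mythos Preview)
Your proposal is correct and follows essentially the same approach as the paper: both arguments identify $\H^\infty_\I(U,F)$ and $\I\circ\H^\infty(U,F)$ isometrically with $\I(\G^\infty(U),F)$ via Mujica's linearization $f\mapsto T_f$ (citing \cite[Propositions 3.1, 3.4]{Muj-91}, \cite[Corollary 2.11]{JimRuiSep-22} and Theorem~\ref{ideal}, and noting that the ideal norm is just the operator norm in all cases), and then read off $\|f\|_\infty=\|T_f\|=\|f\|_{\I\circ\H^\infty}$. The only cosmetic difference is that for the inclusion $\I\circ\H^\infty\subseteq\H^\infty_\I$ the paper argues directly that $f(U)=T(g(U))$ inherits the relevant property from $T$, whereas you route that direction through the linearization as well; your separate approximation argument for $\I=\overline{\F}$ is exactly the missing piece and matches how the paper implicitly treats that case.
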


\begin{proof}
Let $\I=\F,\overline{\F},\K,\S,\W,\Ro,\A$. Note first that $\H^\infty_{\I}(U,F)=\I\circ\H^\infty(U,F)$. Indeed, if $f\in\H^\infty_{\I}(U,F)$, then $f=T_f\circ g_U$ where $T_f\in\I(\G^\infty(U),F)$ by \cite[Propositions 3.1 and 3.4]{Muj-91} and \cite[Corollary 2.11]{JimRuiSep-22}, and thus $f\in\I\circ\H^\infty(U,F)$. Conversely, if $f\in\I\circ\H^\infty(U,F)$, then $f=T\circ g$ for some complex Banach space $G$, $g\in\H^\infty(U,G)$ and $T\in\I(G,F)$. If $\I=\F$ or $\I=\overline{\F}$, then $f$ has a finite rank or $f$ can be approximated by bounded finite-rank holomorphic mappings, respectively. If $\I=\K,\S,\W,\Ro,\A$, since $g(U)$ is bounded in $G$, it follows that $f(U)=T(g(U))$ is relatively compact (resp., separable, relatively weakly compact, Rosenthal, Asplund) in $F$. Hence $f\in\H^\infty_{\I}(U,F)$, as required.

Now, if $\left\|\cdot\right\|_\I$ denotes the operator norm, recall that the mappings 
\begin{align*}
f\in(\H^\infty_{\I}(U,F),\left\|\cdot\right\|_\infty)&\mapsto T_f\in(\I(\G^\infty(U),F),\left\|\cdot\right\|_\I),\\
f\in(\I\circ\H^\infty(U,F),\left\|\cdot\right\|_{\I\circ\H^\infty})&\mapsto T_f\in(\I(\G^\infty(U),F),\left\|\cdot\right\|_\I),
\end{align*}
are isometric isomorphisms (see \cite[Propositions 3.1 and 3.4]{Muj-91} and \cite[Corollary 2.11]{JimRuiSep-22} for the first, and Theorem \ref{ideal} for the second). Hence we have
$$
\left\|f\right\|_\infty=\left\|T_f\right\|_\I=\left\|f\right\|_{\I\circ\H^\infty}
$$
for all $f\in\H^\infty_{\I}(U,F)$. 
\end{proof}

\subsection{$p$-integral holomorphic mappings}\label{subsection 3}

Following \cite[p. 93]{DisJarTon-95}, given two Banach spaces $E$, $F$ and $1\leq p\leq\infty$, we denote by $\I_p(E,F)$ the Banach space of all $p$-integral linear operators $T\colon E\to F$ with the norm 
$$
\iota_p(T)=\inf\left\{\left\|A\right\|\left\|B\right\|\right\},
$$
where the infimum is taken over all $p$-integral factorizations $(A,I^\mu_{\infty,p},B)$ of $T$ in the form
$$
\kappa_F\circ T=A\circ I^\mu_{\infty,p}\circ B\colon E\stackrel{B}{\rightarrow}L_\infty(\mu)\stackrel{I^\mu_{\infty,p}}{\rightarrow}L_p(\mu)\stackrel{A}{\rightarrow}F^{**},
$$
where $(\Omega,\Sigma,\mu)$ is a probability measure space, $A\in\L(L_p(\mu),F^{**})$ and $B\in\L(E,L_\infty(\mu))$. As usual, $I^\mu_{\infty,p}\colon L_\infty(\mu)\to L_p(\mu)$ is the formal identity, and $\kappa_F\colon F\to F^{**}$ is the canonical isometric embedding.  

Let $p^*$ denote the conjugate index of $p\in [1,\infty]$ defined by $p^*=p/(p-1)$ if $p\neq 1$, $p^*=\infty$ if $p=1$, and $p^*=1$ if $p=\infty$. 

The concept of $p$-integral linear operator motivates us to introduce the holomorphic analog as follows.

\begin{definition}\label{def-bsLio}
Let $E,F$ be complex Banach spaces, $U$ be an open subset of $E$ and $1\leq p\leq\infty$. A mapping $f\colon U\to F$ is said to be $p$-integral holomorphic if there exist a probability measure space $(\Omega,\Sigma,\mu)$, an operator $T\in\L(L_p(\mu),F^{**})$ and a mapping $g\in\H^\infty(U,L_\infty(\mu))$ giving rise to the commutative diagram:
$$
\begin{tikzpicture}
  \node (U) {$U$};
  \node (F) [right of=U] {$F$};
  \node (F**) [right of=F] {$F^{**}$};
  \node (L) [below of=U] {$L_\infty(\mu)$};
  \node (Lp) [below of=F**] {$L_p(\mu)$};
  \draw[->] (U) to node {$f$} (F);
  \draw[->] (F) to node {$\kappa_F$} (F**);
  \draw[->] (U) to node [swap] {$g$} (L);
  \draw[->] (L) to node {$I^\mu_{\infty,p}$} (Lp);
  \draw[->] (Lp) to node [swap] {$T$} (F**);
\end{tikzpicture}
$$
The triple $(T,I^\mu_{\infty,p},g)$ is called a $p$-integral holomorphic factorization of $f$. We denote 
$$
\iota^{\H^\infty}_p(f)=\inf\left\{\left\|T\right\|\left\|g\right\|_\infty\right\},
$$
where the infimum is extended over all such factorizations of $f$. Let $\I^{\H^\infty}_p(U,F)$ denote the set of all $p$-integral holomorphic mappings from $U$ into $F$.
\end{definition}

We now study the linearization of $p$-integral holomorphic mappings.

\begin{proposition}\label{integral}
Let $1\leq p\leq\infty$ and $f\in\H^\infty(U,F)$. Then $f\colon U\to F$ is $p$-integral holomorphic if and only if its linearization $T_f\colon\G^\infty(U)\to F$ is $p$-integral. In this case,
$$
\iota_p(T_f)=\iota_p^{\H^\infty}(f).
$$
Furthermore, the mapping $f\mapsto T_f$ is an isometric isomorphism from $(\I_p^{\H^\infty}(U,F),\iota_p^{\H^\infty})$ onto $(\I_p(\G^\infty(U),F),\iota_p)$. 
\end{proposition}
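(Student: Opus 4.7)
The plan is to mirror the proof of Theorem \ref{ideal}, exploiting Mujica's linearization $f = T_f \circ g_U$ together with the linear density of $g_U(U)$ in $\G^\infty(U)$ and the identity $\left\|g_U\right\|_\infty = 1$ (which follows from $g_U(U) \subseteq S_{\G^\infty(U)}$). The only new ingredient relative to the composition-ideal template is the presence of the canonical embedding $\kappa_F$ on the left of a $p$-integral factorization, but this simply moves along for the ride, since the definition of $\iota_p$ for a linear operator already incorporates the same $\kappa_F$.

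For the forward direction, suppose $f$ admits a $p$-integral holomorphic factorization $\kappa_F \circ f = T \circ I^\mu_{\infty,p} \circ g$ with $g \in \H^\infty(U, L_\infty(\mu))$ and $T \in \L(L_p(\mu), F^{**})$. First I would apply Theorem \ref{teo0} to $g$ to produce a unique $T_g \in \L(\G^\infty(U), L_\infty(\mu))$ with $g = T_g \circ g_U$ and $\left\|T_g\right\| = \left\|g\right\|_\infty$. Substituting and using $f = T_f \circ g_U$, one gets
\[
\kappa_F \circ T_f \circ g_U = T \circ I^\mu_{\infty,p} \circ T_g \circ g_U,
\]
and the linear density of $g_U(U)$ in $\G^\infty(U)$ forces $\kappa_F \circ T_f = T \circ I^\mu_{\infty,p} \circ T_g$. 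This is precisely a $p$-integral factorization of $T_f$, so $T_f \in \I_p(\G^\infty(U), F)$ with $\iota_p(T_f) \leq \left\|T\right\| \left\|T_g\right\| = \left\|T\right\| \left\|g\right\|_\infty$. Taking the infimum over all such factorizations of $f$ yields $\iota_p(T_f) \leq \iota_p^{\H^\infty}(f)$.

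For the converse, starting from a $p$-integral factorization $\kappa_F \circ T_f = A \circ I^\mu_{\infty,p} \circ B$ with $A \in \L(L_p(\mu), F^{**})$ and $B \in \L(\G^\infty(U), L_\infty(\mu))$, I would compose on the right with $g_U$ to obtain
\[
\kappa_F \circ f = \kappa_F \circ T_f \circ g_U = A \circ I^\mu_{\infty,p} \circ (B \circ g_U),
\]
where $B \circ g_U \in \H^\infty(U, L_\infty(\mu))$ is holomorphic with $\left\|B \circ g_U\right\|_\infty \leq \left\|B\right\|$ since $\left\|g_U\right\|_\infty = 1$. This exhibits $f$ as $p$-integral holomorphic with $\iota_p^{\H^\infty}(f) \leq \left\|A\right\| \left\|B\circ g_U\right\|_\infty \leq \left\|A\right\| \left\|B\right\|$, and an infimum over factorizations of $T_f$ gives $\iota_p^{\H^\infty}(f) \leq \iota_p(T_f)$.

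The final assertion is then essentially free: Theorem \ref{teo0} already produces the isometric isomorphism $f \mapsto T_f$ from $\H^\infty(U,F)$ onto $\L(\G^\infty(U),F)$, and the equality $\iota_p(T_f) = \iota_p^{\H^\infty}(f)$ together with the two implications above shows that it restricts to a bijection between $\I_p^{\H^\infty}(U,F)$ and $\I_p(\G^\infty(U),F)$ which is isometric with respect to the norms $\iota_p^{\H^\infty}$ and $\iota_p$. I do not expect any serious obstacle here; the whole argument is a routine adaptation of Theorem \ref{ideal} to the particular diagram defining $p$-integrality. The one step that warrants slight care is the density argument in the forward direction, since it is what legitimately transfers the identity $\kappa_F \circ f = T \circ I^\mu_{\infty,p} \circ g$ on $U$ into the corresponding identity $\kappa_F \circ T_f = T \circ I^\mu_{\infty,p} \circ T_g$ on the linearization $\G^\infty(U)$.
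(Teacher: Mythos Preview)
Your proposal is correct and follows essentially the same route as the paper's own proof: linearize $g$ via Theorem \ref{teo0}, use the linear density of $g_U(U)$ to pass from $\kappa_F\circ f$ to $\kappa_F\circ T_f$, and for the converse precompose a linear $p$-integral factorization of $T_f$ with $g_U$. The only cosmetic difference is that the paper records the equality $\left\|B\circ g_U\right\|_\infty=\left\|B\right\|$ (which follows from the uniqueness in Theorem \ref{teo0}, since $T_{B\circ g_U}=B$) whereas you use only the inequality $\leq$, which is all that is needed.
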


\begin{proof}
If $f\colon U\to F$ is $p$-integral holomorphic, then we have 
$$
\kappa_F\circ f=T\circ I^\mu_{\infty,p}\circ g\colon U\stackrel{g}{\rightarrow}L_\infty(\mu)\stackrel{I^\mu_{\infty,p}}{\rightarrow}L_p(\mu)\stackrel{T}{\rightarrow}F^{**},
$$
where $(T,I^\mu_{\infty,p},g)$ is a $p$-integral holomorphic factorization of $f$. Applying Theorem \ref{teo0}, we obtain 
$$
\kappa_F\circ T_f\circ g_U=T\circ I^\mu_{\infty,p}\circ T_g\circ g_U\colon U\stackrel{ g_U}{\rightarrow}\G^\infty(U)\stackrel{T_g}
{\rightarrow}L_\infty(\mu)\stackrel{I^\mu_{\infty,p}}{\rightarrow}L_p(\mu)\stackrel{T}{\rightarrow}F^{**},
$$
By the denseness of $\lin(g_U(U))$ in $\G^\infty(U)$, we deduce that
$$
\kappa_F\circ T_f=T\circ I^\mu_{\infty,p}\circ T_g\colon\G^\infty(U)\stackrel{T_g}{\rightarrow}L_\infty(\mu)\stackrel{I^\mu_{\infty,p}}{\rightarrow}L_p(\mu)\stackrel{T}{\rightarrow}F^{**}
$$
and therefore $T_f\colon\G^\infty(U)\to F$ is $p$-integral. Furthermore, we have
$$
\iota_p(T_f)\leq\left\|T\right\|\left\|T_g\right\|=\left\|T\right\|\left\|g\right\|_\infty
$$
and taking infimum over all the $p$-integral holomorphic factorization of $f$, we deduce
$$
\iota_p(T_f)\leq\iota_p^{\H^\infty}(f).
$$
Conversely, if $T_f\colon\G^\infty(U)\to F$ is $p$-integral, we have 
$$
\kappa_F\circ T_f=T\circ I^\mu_{\infty,p}\circ S\colon\G^\infty(U)\stackrel{S}{\rightarrow}L_\infty(\mu)\stackrel{I^\mu_{\infty,p}}{\rightarrow}L_p(\mu)\stackrel{T}{\rightarrow}F^{**}
$$
where $(T,I^\mu_{\infty,p},S)$ is a $p$-integral factorization of $T_f$. Note that $g:=S\circ g_U\in\H^\infty(U,L_\infty(\mu))$, and since  
$$
\kappa_F\circ f=T\circ I^\mu_{\infty,p}\circ g\colon U\stackrel{g}{\rightarrow}L_\infty(\mu)\stackrel{I^\mu_{\infty,p}}{\rightarrow}L_p(\mu)\stackrel{T}{\rightarrow}F^{**}
$$ 
we conclude that $f$ is $p$-integral holomorphic. Furthermore, we have  
$$
\iota_p^{\H^\infty}(f)\leq\left\|T\right\|\left\|g\right\|_\infty=\left\|T\right\|\left\|S\right\|,
$$
and taking infimum over all the $p$-integral factorization of $T_f$, we deduce
$$
\iota_p^{\H^\infty}(f)\leq\iota_p(T_f).
$$
To prove the last assertion of the statement, it suffices to show that the mapping $f\mapsto T_f$ from $\I_p^{\H^\infty}(U,F)$ to $\I_p(\G^\infty(U),F)$ is surjective. Take $T\in\I_p(\G^\infty(U),F)$ and then $T=T_f$ for some $f\in\H^\infty(U,F)$ by Theorem \ref{teo0}. Hence $T_f\in\I_p(\G^\infty(U),F)$ and this implies that $f\in\I_p^{\H^\infty}(U,F)$ by the above proof. 
\end{proof}

Combining Theorem \ref{ideal} and Proposition \ref{integral}, we deduce that the bounded-holomorphic ideal $\I_p^{\H^\infty}$ is generated by composition with the operator ideal $\I_p$.

\begin{corollary}\label{now}
Let $1\leq p\leq\infty$. Then $\I_p^{\H^\infty}=\I_p\circ\H^\infty$ and $\iota_p^{\H^\infty}(f)=\left\|f\right\|_{\I_p\circ\H^\infty}$ for all $f\in\I_p^{\H^\infty}$. In particular, $\left(\I^{\H^\infty}_p,\iota^{\H^\infty}_p\right)$ is a Banach ideal of bounded holomorphic mappings. $\hfill\qed$
\end{corollary}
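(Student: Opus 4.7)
The plan is simply to chain together Proposition \ref{integral} with Theorem \ref{ideal}, both of which characterize membership in the relevant class of bounded holomorphic mappings through the same linear object, namely the Mujica linearization $T_f\in\L(\G^\infty(U),F)$. Fix $1\leq p\leq\infty$. By Proposition \ref{integral}, a mapping $f\in\H^\infty(U,F)$ belongs to $\I_p^{\H^\infty}(U,F)$ if and only if $T_f\in\I_p(\G^\infty(U),F)$, and in that case $\iota_p^{\H^\infty}(f)=\iota_p(T_f)$. On the other hand, applying Theorem \ref{ideal} with the operator ideal $(\I_p,\iota_p)$, the mapping $f$ belongs to $\I_p\circ\H^\infty(U,F)$ if and only if $T_f\in\I_p(\G^\infty(U),F)$, with $\left\|f\right\|_{\I_p\circ\H^\infty}=\iota_p(T_f)$. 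Both characterizations have the same right-hand side, so the two classes coincide and their norms agree on the common class:
$$
\I_p^{\H^\infty}(U,F)=\I_p\circ\H^\infty(U,F),\qquad \iota_p^{\H^\infty}(f)=\left\|f\right\|_{\I_p\circ\H^\infty}.
$$

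For the final assertion that $(\I_p^{\H^\infty},\iota_p^{\H^\infty})$ is a Banach ideal of bounded holomorphic mappings, I would invoke Corollary \ref{new2}: it is classical that $(\I_p,\iota_p)$ is a Banach operator ideal (see \cite{DisJarTon-95}), so Corollary \ref{new2} transfers this property to the composition ideal $(\I_p\circ\H^\infty,\left\|\cdot\right\|_{\I_p\circ\H^\infty})$, and the isometric identification just established transports the Banach ideal structure to $(\I_p^{\H^\infty},\iota_p^{\H^\infty})$.

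There is essentially no obstacle in this argument: all the analytic content — the passage between $p$-integral holomorphic factorizations of $f$ and $p$-integral factorizations of $T_f$, and the equality of norms — has already been carried out in Proposition \ref{integral}. The corollary is a bookkeeping step that repackages those equivalences as the identification of two bounded-holomorphic ideals with the same norm, exactly as is customary when one recognizes that a class defined by a factorization scheme coincides with a composition ideal in the sense of Definition \ref{comp-ideal}.
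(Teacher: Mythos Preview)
Your proposal is correct and matches the paper's own approach exactly: the corollary is stated with a $\qed$ and no written proof, preceded by the remark that it follows ``combining Theorem \ref{ideal} and Proposition \ref{integral}'', which is precisely the chaining you carry out, with Corollary \ref{new2} (and the fact that $(\I_p,\iota_p)$ is a Banach operator ideal) supplying the Banach ideal conclusion.
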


We next see that if a mapping is 1-integral holomorphic, then it is $p$-integral holomorphic for any $1\leq p\leq\infty$. 

\begin{corollary}
Let $1\leq p<q\leq\infty$. Then $\I^{\H^\infty}_p(U,F)\subseteq\I^{\H^\infty}_q(U,F)$ and $\iota^{\H^\infty}_q(f)\leq\iota^{\H^\infty}_p(f)$ for each $f\in\I^{\H^\infty}_p(U,F)$.
\end{corollary}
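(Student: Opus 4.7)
The plan is to reduce the holomorphic statement to the corresponding classical monotonicity property for $p$-integral linear operators, by passing through the linearization $f\mapsto T_f$ given in Proposition~\ref{integral}. The underlying linear fact, recorded e.g.\ in \cite[Ch.~5]{DisJarTon-95}, is that for any Banach spaces $X,Y$ and $1\leq p<q\leq\infty$, one has $\I_p(X,Y)\subseteq\I_q(X,Y)$ with $\iota_q(T)\leq\iota_p(T)$ for every $T\in\I_p(X,Y)$. This is an immediate consequence of the fact that on any probability measure space $(\Omega,\Sigma,\mu)$ the formal identity $I^\mu_{\infty,p}$ factors as $I^\mu_{q,p}\circ I^\mu_{\infty,q}$ with $\|I^\mu_{q,p}\|\leq 1$, so any $p$-integral factorization gives rise to a $q$-integral one of no larger norm.

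The proof then proceeds in three short steps. First, take $f\in\I_p^{\H^\infty}(U,F)$; by Proposition~\ref{integral}, its linearization satisfies $T_f\in\I_p(\G^\infty(U),F)$ and $\iota_p(T_f)=\iota_p^{\H^\infty}(f)$. Second, apply the linear inclusion recalled above with $X=\G^\infty(U)$ and $Y=F$ to conclude that $T_f\in\I_q(\G^\infty(U),F)$ and $\iota_q(T_f)\leq\iota_p(T_f)$. Third, invoke Proposition~\ref{integral} in the reverse direction (which also asserts that every $q$-integral operator on $\G^\infty(U)$ arises as the linearization of a $q$-integral holomorphic mapping with matching norms) to conclude that $f\in\I_q^{\H^\infty}(U,F)$ with
\[
\iota_q^{\H^\infty}(f)=\iota_q(T_f)\leq\iota_p(T_f)=\iota_p^{\H^\infty}(f).
\]

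There is no real obstacle here: the content of the corollary is entirely contained in the linearization correspondence of Proposition~\ref{integral} together with a standard linear fact, so the only thing to verify is the citation of the linear monotonicity $\iota_q\leq\iota_p$ from \cite{DisJarTon-95}. If one wished to avoid citing the linear statement, the alternative would be to feed a $p$-integral holomorphic factorization $\kappa_F\circ f=T\circ I^\mu_{\infty,p}\circ g$ directly through the factorization $I^\mu_{\infty,p}=I^\mu_{q,p}\circ I^\mu_{\infty,q}$ to obtain the desired $q$-integral factorization $\kappa_F\circ f=(T\circ I^\mu_{q,p})\circ I^\mu_{\infty,q}\circ g$ with $\|T\circ I^\mu_{q,p}\|\|g\|_\infty\leq\|T\|\|g\|_\infty$, and then take the infimum over all $p$-integral holomorphic factorizations of $f$.
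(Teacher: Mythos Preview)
Your proof is correct and follows exactly the paper's approach: the paper's proof simply says the result follows immediately from Proposition~\ref{integral} and \cite[Proposition~5.1]{DisJarTon-95}, which is precisely the linearization-plus-linear-monotonicity argument you spell out. Your alternative direct factorization argument is also fine but unnecessary here.
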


\begin{proof}
It follows immediately from Proposition \ref{integral} and \cite[Proposition 5.1]{DisJarTon-95}.
\end{proof}

We finish our study of $p$-integral holomorphic mappings with a property of their ranges.

\begin{corollary}
Let $1\leq p<\infty$. Every $p$-integral holomorphic mapping $f\colon U\to F$ has relatively weakly compact range. 
\end{corollary}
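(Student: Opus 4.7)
The plan is to reduce the statement to the linear case by invoking Proposition~\ref{integral} and then applying a classical inclusion between operator ideals. First, by Proposition~\ref{integral} (or equivalently Corollary~\ref{now}), the hypothesis $f\in\I^{\H^\infty}_p(U,F)$ is equivalent to saying that the Mujica linearization $T_f\colon\G^\infty(U)\to F$ belongs to $\I_p(\G^\infty(U),F)$, and moreover we have the factorization $f=T_f\circ g_U$ with $g_U\in\H^\infty(U,\G^\infty(U))$ satisfying $g_U(U)\subseteq S_{\G^\infty(U)}\subseteq B_{\G^\infty(U)}$ by Theorem~\ref{teo0}(2).

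The second ingredient is the classical operator-theoretic fact that, for $1\le p<\infty$, every $p$-integral linear operator between Banach spaces is weakly compact, i.e., $\I_p\subseteq\W$ as operator ideals; this is recorded in \cite{DisJarTon-95}. Granting this, $T_f\in\W(\G^\infty(U),F)$, so $T_f(B_{\G^\infty(U)})$ is relatively weakly compact in $F$.

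Combining the two observations yields
$$
f(U)=T_f(g_U(U))\subseteq T_f(B_{\G^\infty(U)}),
$$
so $f(U)$ is contained in a relatively weakly compact subset of $F$ and is therefore itself relatively weakly compact. Equivalently, the argument can be phrased via Proposition~\ref{new} as the chain of inclusions $\I_p^{\H^\infty}=\I_p\circ\H^\infty\subseteq\W\circ\H^\infty=\H^\infty_\W$, which is exactly the conclusion. The only non-routine ingredient is the inclusion $\I_p\subseteq\W$ for $1\le p<\infty$, which lies outside the paper itself; no genuine obstacle is expected beyond citing it correctly, and one should remark that the restriction $p<\infty$ is essential here, since the identity on $\ell_\infty$ shows that $\infty$-integral operators need not be weakly compact.
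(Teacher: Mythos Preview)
Your proof is correct and follows essentially the same route as the paper: linearize via Proposition~\ref{integral} to obtain $T_f\in\I_p(\G^\infty(U),F)$, invoke the classical inclusion $\I_p\subseteq\W$ for $1\le p<\infty$ from \cite{DisJarTon-95}, and then pass back to $f$ (the paper cites \cite[Proposition~3.4]{Muj-91} for this last step, whereas you spell it out directly via $g_U(U)\subseteq B_{\G^\infty(U)}$, but these are the same observation). Your closing remark on the necessity of $p<\infty$ is a nice addition not present in the paper.
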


\begin{proof}
Let $f\in\I_p^{\H^\infty}(U,F)$. Then $T_f\in\I_p(\G^\infty(U),F)$ by Proposition \ref{integral}, hence $T_f\in\W(\G^\infty(U),F)$ by \cite[Proposition 5.5 and Theorem 2.17]{DisJarTon-95}, and thus $f\in\H^\infty_\W(U,F)$ by \cite[Proposition 3.4]{Muj-91}.
\end{proof}

\subsection{$p$-nuclear holomorphic mappings}\label{subsection 4}

Given Banach spaces $E$, $F$ and $1\leq p<\infty$, we denote by $\Nu_p(E,F)$ the Banach space of all $p$-nuclear linear operators $T\colon E\to F$ with the norm 
$$
\nu_p(T)=\inf\left\{\left\|A\right\|\left\|M_\lambda\right\|\left\|B\right\|\right\},
$$
where the infimum is taken over all such $p$-nuclear factorizations $(A,M_\lambda,B)$ of $T$ in the form
$$
T=A\circ M_{\lambda}\circ B\colon E\stackrel{B}{\rightarrow}\ell_\infty\stackrel{M_{\lambda}}{\rightarrow}\ell_p\stackrel{A}{\rightarrow}F,
$$
where $A\in\L(\ell_p,F)$, $B\in\L(E,\ell_\infty)$ and $M_\lambda\in\L(\ell_\infty,\ell_p)$ is a diagonal operator induced by a sequence $\lambda\in\ell_p$ (see \cite[p. 111]{DisJarTon-95}).

In analogy with this concept, we introduce the following variant in the holomorphic setting.

\begin{definition}
Let $E,F$ be complex Banach spaces, $U$ be an open subset of $E$ and $1\leq p<\infty$. A mapping $f\colon U\to F$ is said to be $p$-nuclear holomorphic if there exist an operator $T\in\L(\ell_p,F)$, a mapping $g\in\H^\infty(U,\ell_\infty)$ and a diagonal operator $M_\lambda\in\L(\ell_\infty,\ell_p)$ induced by a sequence $\lambda\in\ell_p$ such that $f=T\circ M_\lambda\circ g$, that is, the following diagram commutes:
$$
\begin{tikzpicture}
  \node (U) {$U$};
  \node (F) [right of=U] {$F$};
  \node (li) [below of=U] {$\ell_\infty$};
  \node (lp) [below of=F] {$\ell_p$};
  \draw[->] (U) to node {$f$} (F);
  \draw[->] (U) to node [swap] {$g$} (li);
  \draw[->] (li) to node {$M_\lambda$} (lp);
  \draw[->] (lp) to node [swap] {$T$} (F);
\end{tikzpicture}
$$
The triple $(T,M_\lambda,g)$ is called a $p$-nuclear holomorphic factorization of $f$. We set 
$$
\nu^{\H^\infty}_p(f)=\inf\left\{\left\|T\right\|\left\|M_\lambda\right\|\left\|g\right\|_\infty\right\},
$$
where the infimum is extended over all such factorizations of $f$. Let $\Nu^{\H^\infty}_p(U,F)$ denote the set of all $p$-nuclear holomorphic mappings from $U$ into $F$.  
\end{definition} 


A study on $p$-nuclear holomorphic mappings similar to that of the preceding subsection on $p$-integral holomorphic mappings is developed next.

\begin{proposition}\label{nuclear}
Let $1\leq p<\infty$ and $f\in\H^\infty(U,F)$. Then $f\colon U\to F$ is $p$-nuclear holomorphic if and only if its linearization $T_f\colon\G^\infty(U)\to F$ is $p$-nuclear. In this case,
$$
\nu_p(T_f)=\nu_p^{\H^\infty}(f).
$$
Furthermore, the mapping $f\mapsto T_f$ is an isometric isomorphism from $(\Nu_p^{\H^\infty}(U,F),\nu_p^{\H^\infty})$ onto $(\Nu_p(\G^\infty(U),F),\nu_p)$. 
\end{proposition}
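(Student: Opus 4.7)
The plan is to follow exactly the blueprint of the proof of Proposition \ref{integral}, replacing the $p$-integral factorization data $(T,I^\mu_{\infty,p},g)$ and the canonical embedding $\kappa_F\colon F\to F^{**}$ with the $p$-nuclear factorization data $(T,M_\lambda,g)$ (no bidual involved, since $\Nu_p$-factorizations land directly in $F$). The two main ingredients are (i) Mujica's linearization (Theorem \ref{teo0}), which gives for any $g\in\H^\infty(U,\ell_\infty)$ a unique $T_g\in\L(\G^\infty(U),\ell_\infty)$ with $T_g\circ g_U=g$ and $\|T_g\|=\|g\|_\infty$, and (ii) the fact that $\lin(g_U(U))$ is norm-dense in $\G^\infty(U)$, which lets us upgrade equalities between maps on $g_U(U)$ to equalities on all of $\G^\infty(U)$.

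For the forward direction, assume $f=T\circ M_\lambda\circ g$ is a $p$-nuclear holomorphic factorization. Linearize $g$ via Theorem \ref{teo0} to obtain $T_g\in\L(\G^\infty(U),\ell_\infty)$ with $\|T_g\|=\|g\|_\infty$, and write
$$
T_f\circ g_U=f=T\circ M_\lambda\circ T_g\circ g_U.
$$
By linear denseness of $g_U(U)$ in $\G^\infty(U)$ this forces $T_f=T\circ M_\lambda\circ T_g$, which is a $p$-nuclear factorization of $T_f$. Hence $T_f\in\Nu_p(\G^\infty(U),F)$ and
$$
\nu_p(T_f)\le \|T\|\,\|M_\lambda\|\,\|T_g\|=\|T\|\,\|M_\lambda\|\,\|g\|_\infty,
$$
so taking the infimum over all $p$-nuclear holomorphic factorizations of $f$ gives $\nu_p(T_f)\le \nu_p^{\H^\infty}(f)$.

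For the converse, suppose $T_f\in\Nu_p(\G^\infty(U),F)$ and fix a $p$-nuclear factorization $T_f=A\circ M_\lambda\circ B$ with $B\in\L(\G^\infty(U),\ell_\infty)$, $M_\lambda\in\L(\ell_\infty,\ell_p)$, $A\in\L(\ell_p,F)$. Set $g:=B\circ g_U$; since $g_U\in\H^\infty(U,\G^\infty(U))$ with $g_U(U)\subseteq S_{\G^\infty(U)}$ (Theorem \ref{teo0}), we have $g\in\H^\infty(U,\ell_\infty)$ and $\|g\|_\infty\le\|B\|$. Then
$$
f=T_f\circ g_U=A\circ M_\lambda\circ B\circ g_U=A\circ M_\lambda\circ g
$$
is a $p$-nuclear holomorphic factorization of $f$, so $f\in\Nu_p^{\H^\infty}(U,F)$ and
$$
\nu_p^{\H^\infty}(f)\le \|A\|\,\|M_\lambda\|\,\|g\|_\infty\le \|A\|\,\|M_\lambda\|\,\|B\|.
$$
Infimizing over all $p$-nuclear factorizations of $T_f$ yields $\nu_p^{\H^\infty}(f)\le\nu_p(T_f)$, completing the equality $\nu_p(T_f)=\nu_p^{\H^\infty}(f)$.

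For the last assertion, the mapping $f\mapsto T_f$ is already known to be a linear isometry from $\H^\infty(U,F)$ onto $\L(\G^\infty(U),F)$ by Theorem \ref{teo0}; the two equivalences just proved show that it restricts to a norm-preserving bijection between $\Nu_p^{\H^\infty}(U,F)$ and $\Nu_p(\G^\infty(U),F)$. Surjectivity is immediate: if $T\in\Nu_p(\G^\infty(U),F)$, write $T=T_f$ for the unique $f\in\H^\infty(U,F)$ afforded by Theorem \ref{teo0}, and the converse direction above gives $f\in\Nu_p^{\H^\infty}(U,F)$. There is no serious obstacle; the only point that requires a touch of care is the cancellation step $T\circ M_\lambda\circ T_g\circ g_U=T_f\circ g_U\Rightarrow T\circ M_\lambda\circ T_g=T_f$, which relies on the density of $\lin(g_U(U))$ in $\G^\infty(U)$ rather than on any pointwise identity.
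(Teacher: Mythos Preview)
Your proof is correct and follows essentially the same route as the paper's own argument: linearize a $p$-nuclear holomorphic factorization of $f$ via Theorem~\ref{teo0} to obtain a $p$-nuclear factorization of $T_f$, and conversely precompose a $p$-nuclear factorization of $T_f$ with $g_U$ to obtain one for $f$, with the norm equality coming from $\|T_g\|=\|g\|_\infty$ and the cancellation step from the density of $\lin(g_U(U))$. The only cosmetic difference is that the paper writes $\|g\|_\infty=\|S\|$ in the converse direction (using uniqueness in Theorem~\ref{teo0} to identify $S=T_g$), whereas you record the inequality $\|g\|_\infty\le\|B\|$, which is all that is needed.
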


\begin{proof}
If $f\colon U\to F$ is $p$-nuclear holomorphic, then we have 
$$
f=T\circ M_{\lambda}\circ g\colon U\stackrel{g}{\rightarrow}\ell_\infty\stackrel{M_{\lambda}}{\rightarrow}\ell_p\stackrel{T}{\rightarrow}F,
$$
where $(T,M_\lambda,g)$ is a $p$-nuclear holomorphic factorization of $f$. Using Theorem \ref{teo0}, we obtain 
$$
T_f\circ g_U
=T\circ M_{\lambda}\circ T_g\circ  g_U\colon U\stackrel{ g_U}{\rightarrow}\G^\infty(U)\stackrel{T_g}{\rightarrow}\ell_\infty\stackrel{M_{\lambda}}{\rightarrow}\ell_p\stackrel{T}{\rightarrow}F.
$$
By the denseness of $\lin( g_U)$ in $\G^\infty(U)$, we deduce that
$$
T_f=T\circ M_{\lambda}\circ T_g\colon\G^\infty(U)\stackrel{T_g}{\rightarrow}\ell_\infty\stackrel{M_{\lambda}}{\rightarrow}\ell_p\stackrel{T}{\rightarrow}F,
$$
and therefore $T_f\colon\G^\infty(U)\to F$ is $p$-nuclear. Furthermore, we have
$$
\nu_p(T_f)\leq\left\|T\right\|\left\|M_{\lambda}\right\|\left\|T_g\right\|=\left\|T\right\|\left\|M_{\lambda}\right\|\left\|g\right\|_\infty
$$
and since we were working with an arbitrary $p$-nuclear holomorphic factorization for $f$, we obtain
$$
\nu_p(T_f)\leq\nu_p^{\H^\infty}(f).
$$
Conversely, if $T_f\colon\G^\infty(U)\to F$ is $p$-nuclear, we have 
$$
T_f=T\circ M_{\lambda}\circ S\colon\G^\infty(U)\stackrel{S}{\rightarrow}\ell_\infty\stackrel{M_{\lambda}}{\rightarrow}\ell_p\stackrel{T}{\rightarrow}F,
$$
where $(T,M_\lambda,S)$ is a $p$-nuclear factorization of $T_f$. Note that $g:=S\circ g_U\in\H^\infty(U,\ell_\infty)$ and since  
$$
f=T\circ M_\lambda\circ g\colon U\stackrel{g}{\rightarrow}\ell_\infty\stackrel{M_{\lambda}}{\rightarrow}\ell_p\stackrel{T}{\rightarrow}F,
$$ 
we conclude that $f$ is $p$-nuclear holomorphic. Furthermore, we infer that 
$$
\nu_p^{\H^\infty}(f)\leq\left\|T\right\|\left\|M_{\lambda}\right\|\left\|g\right\|_\infty=\left\|T\right\|\left\|M_{\lambda}\right\|\left\|S\right\|,
$$
and this ensures that 
$$
\nu_p^{\H^\infty}(f)\leq\nu_p(T_f).
$$
To prove the last assertion of the statement, it suffices to show that the mapping $f\mapsto T_f$ from $\Nu_p^{\H^\infty}(U,F)$ to $\Nu_p(\G^\infty(U),F)$ is surjective. Take $T\in\Nu_p(\G^\infty(U),F)$ and then $T=T_f$ for some $f\in\H^\infty(U,F)$ by Theorem \ref{teo0}. Hence $T_f\in\Nu_p(\G^\infty(U),F)$ and this implies that $f\in\Nu_p^{\H^\infty}(U,F)$ by the above proof. 
\end{proof}

Using Theorem \ref{ideal} and Proposition \ref{nuclear}, we obtain the following.

\begin{corollary}
Let $1\leq p<\infty$. Then $\Nu_p^{\H^\infty}=\Nu_p\circ\H^\infty$ and $\nu_p^{\H^\infty}(f)=\left\|f\right\|_{\Nu_p\circ\H^\infty}$ for all $f\in\Nu_p^{\H^\infty}$. In particular, $\left(\Nu^{\H^\infty}_p,\nu^{\H^\infty}_p\right)$ is a Banach ideal of bounded holomorphic mappings. $\hfill\qed$
\end{corollary}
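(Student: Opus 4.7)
The plan is to combine Proposition \ref{nuclear} with Theorem \ref{ideal} applied to the normed operator ideal $(\Nu_p,\nu_p)$, in exact analogy with how Corollary \ref{now} was obtained from Proposition \ref{integral} and Theorem \ref{ideal}. Both results factor through the same object, namely the linearization $T_f\in\L(\G^\infty(U),F)$, so the identification will be essentially automatic.

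First, fix a mapping $f\in\H^\infty(U,F)$. By Proposition \ref{nuclear}, $f\in\Nu_p^{\H^\infty}(U,F)$ if and only if $T_f\in\Nu_p(\G^\infty(U),F)$, and moreover $\nu_p^{\H^\infty}(f)=\nu_p(T_f)$. On the other hand, since $(\Nu_p,\nu_p)$ is a normed operator ideal (see \cite[Ch.~5]{DisJarTon-95}), Theorem \ref{ideal} applied with $\I=\Nu_p$ yields that $f\in\Nu_p\circ\H^\infty(U,F)$ if and only if $T_f\in\Nu_p(\G^\infty(U),F)$, and in that case $\|f\|_{\Nu_p\circ\H^\infty}=\nu_p(T_f)$. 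Chaining the two equivalences gives $\Nu_p^{\H^\infty}(U,F)=\Nu_p\circ\H^\infty(U,F)$ and $\nu_p^{\H^\infty}(f)=\nu_p(T_f)=\|f\|_{\Nu_p\circ\H^\infty}$ for each $f$ in this common set.

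For the "in particular" assertion, I would invoke Corollary \ref{new2}: since $(\Nu_p,\nu_p)$ is a Banach operator ideal, $(\Nu_p\circ\H^\infty,\|\cdot\|_{\Nu_p\circ\H^\infty})$ is a Banach bounded-holomorphic ideal. Transporting the structure along the isometric identity established above, $(\Nu_p^{\H^\infty},\nu_p^{\H^\infty})$ inherits the Banach ideal structure.

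There is no genuine obstacle here; the statement is essentially a formal corollary once Proposition \ref{nuclear} is in hand. The only point that warrants a sentence of justification is the citation that $(\Nu_p,\nu_p)$ is itself a Banach operator ideal, so that Corollary \ref{new2} can be applied to deduce the Banach-ideal conclusion.
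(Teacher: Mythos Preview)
Your proposal is correct and matches the paper's intended argument exactly: the paper derives the corollary by combining Theorem \ref{ideal} (applied to $\I=\Nu_p$) with Proposition \ref{nuclear}, both of which characterize membership and norm via the linearization $T_f$. Your additional appeal to Corollary \ref{new2} for the Banach-ideal conclusion is appropriate and consistent with the paper's framework.
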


The following result follows from Proposition \ref{nuclear} and \cite[Corollary 5.24 (b)]{DisJarTon-95}.

\begin{corollary}
Let $1\leq p<q<\infty$. Then $\Nu^{\H^\infty}_p(U,F)\subseteq\Nu^{\H^\infty}_q(U,F)$ and $\nu^{\H^\infty}_q(f)\leq\nu^{\H^\infty}_p(f)$ for each $f\in\Nu^{\H^\infty}_p(U,F)$. $\hfill\qed$
\end{corollary}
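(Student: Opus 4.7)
The plan is to transfer the problem from bounded holomorphic mappings to bounded linear operators via Mujica's linearization, apply the known inclusion between classes of $p$-nuclear and $q$-nuclear linear operators, and then translate back. The isometric identification provided by Proposition \ref{nuclear} makes this routine, and indeed the authors explicitly announce this line of attack.

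First, I would fix $f \in \Nu_p^{\H^\infty}(U,F)$ and invoke Proposition \ref{nuclear} to obtain that its linearization $T_f \colon \G^\infty(U) \to F$ belongs to $\Nu_p(\G^\infty(U),F)$ with $\nu_p(T_f) = \nu_p^{\H^\infty}(f)$. At this point the holomorphic ingredients have been completely absorbed into the linear operator $T_f$.

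Next, I would apply \cite[Corollary 5.24 (b)]{DisJarTon-95}, which asserts exactly that for $1 \leq p < q < \infty$ one has the inclusion $\Nu_p(E,F) \subseteq \Nu_q(E,F)$ for arbitrary Banach spaces $E,F$, together with the norm estimate $\nu_q(T) \leq \nu_p(T)$. Applied to $T_f$ with $E = \G^\infty(U)$, this gives $T_f \in \Nu_q(\G^\infty(U),F)$ and $\nu_q(T_f) \leq \nu_p(T_f)$.

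Finally, I would apply Proposition \ref{nuclear} once more, now in the direction that translates a $q$-nuclear linearization back to a $q$-nuclear holomorphic mapping: since $T_f \in \Nu_q(\G^\infty(U),F)$, we conclude that $f \in \Nu_q^{\H^\infty}(U,F)$ with $\nu_q^{\H^\infty}(f) = \nu_q(T_f)$. Chaining the equalities and the inequality yields
$$
\nu_q^{\H^\infty}(f) = \nu_q(T_f) \leq \nu_p(T_f) = \nu_p^{\H^\infty}(f),
$$
which is the desired conclusion. There is no genuine obstacle here: the nontrivial content has already been packaged in the linearization identities of Proposition \ref{nuclear} and in the linear-operator inclusion of Diestel--Jarchow--Tonge, so the proof amounts to a two-line invocation of each.
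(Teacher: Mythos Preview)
Your proposal is correct and follows precisely the route indicated by the paper: the corollary is stated without proof, the authors merely noting that it follows from Proposition \ref{nuclear} and \cite[Corollary 5.24 (b)]{DisJarTon-95}, which is exactly the linearize--apply linear result--delinearize argument you have written out.
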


\begin{corollary}
Let $1\leq p<\infty$. Every $p$-nuclear holomorphic mapping $f\colon U\to F$ has relatively compact range.
\end{corollary}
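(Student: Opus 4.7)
The approach parallels the proof of the corresponding statement for $p$-integral holomorphic mappings at the end of Subsection \ref{subsection 3}. The plan is: given $f\in\Nu_p^{\H^\infty}(U,F)$, pass to the linearization $T_f$ via Proposition \ref{nuclear}, argue that $T_f$ is compact, and then transfer compactness back to $f$ through the Mujica factorization $f=T_f\circ g_U$.

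First, Proposition \ref{nuclear} yields $T_f\in\Nu_p(\G^\infty(U),F)$. The key step is the well-known inclusion $\Nu_p\subseteq\K$ for $1\leq p<\infty$ (see, e.g., \cite[Corollary 5.24]{DisJarTon-95}). The underlying reason is that in any $p$-nuclear factorization $T_f=A\circ M_\lambda\circ B$ with $\lambda\in\ell_p$, the diagonal operator $M_\lambda\colon\ell_\infty\to\ell_p$ is compact: it is the operator-norm limit of its finite-rank truncations $M_{\lambda^{(n)}}$ with $\lambda^{(n)}=(\lambda_1,\dots,\lambda_n,0,0,\dots)$, since
$$
\left\|M_\lambda-M_{\lambda^{(n)}}\right\|\leq\left\|(\lambda_k)_{k>n}\right\|_{\ell_p}\longrightarrow 0.
$$
Composing with $A$ and $B$ preserves compactness, so $T_f\in\K(\G^\infty(U),F)$.

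Once $T_f\in\K(\G^\infty(U),F)$, we conclude in either of two equivalent ways: invoking \cite[Proposition 3.4]{Muj-91} forces $f\in\H^\infty_\K(U,F)$; or, more directly, since $g_U(U)\subseteq S_{\G^\infty(U)}$ is bounded and $T_f$ is compact, the image $f(U)=T_f(g_U(U))$ is relatively compact in $F$. There is essentially no obstacle here beyond bookkeeping: the entire argument reduces to the linearization machinery already developed in Proposition \ref{nuclear} and the classical inclusion $\Nu_p\subseteq\K$, exactly as the analogous statement for $p$-integral mappings reduced to $\I_p\subseteq\W$.
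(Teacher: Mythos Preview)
Your proof is correct and follows essentially the same approach as the paper: linearize via Proposition \ref{nuclear} to obtain $T_f\in\Nu_p(\G^\infty(U),F)$, invoke the inclusion $\Nu_p\subseteq\K$ from \cite[Corollary 5.24]{DisJarTon-95}, and conclude that $f\in\H^\infty_\K(U,F)$ via \cite[Proposition 3.4]{Muj-91}. The only difference is that you add an explicit justification for why $M_\lambda$ is compact and mention the alternative direct argument using $g_U(U)\subseteq S_{\G^\infty(U)}$, both of which are correct but not needed since the paper simply cites the relevant results.
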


\begin{proof}
Let $f\in\Nu_p^{\H^\infty}(U,F)$. Then $T_f\in\Nu_p(\G^\infty(U),F)$ by Proposition \ref{nuclear}, hence $T_f\in\K(\G^\infty(U),F)$ by \cite[Corollary 5.24 (a)]{DisJarTon-95}, and thus $f\in\H^\infty_\K(U,F)$ by \cite[Proposition 3.4]{Muj-91}.
\end{proof}

As in the linear case \cite[Theorem 5.27]{DisJarTon-95} and in the Lipschitz case \cite[Theorem 2.12]{Saa-17}, $p$-nuclear holomorphic mappings admit the following factorization.

\begin{corollary}\label{nuclear-integral}
Let $1\leq p<\infty$ and $f\in\H^\infty(U,F)$. Then $f\in\Nu_p^{\H^\infty}(U,F)$ if and only if there exist a Banach space $G$, an operator $T\in\K(G,F)$ and a mapping $g\in\I_p^{\H^\infty}(U,G)$ such that $f=T\circ g$. 
\end{corollary}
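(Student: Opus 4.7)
The plan is to reduce the statement to the analogous linear factorization for $p$-nuclear operators, namely \cite[Theorem 5.27]{DisJarTon-95}, via the linearization machinery already developed in Propositions \ref{integral} and \ref{nuclear}.

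For the forward implication, assume $f\in\Nu_p^{\H^\infty}(U,F)$. By Proposition \ref{nuclear}, its linearization $T_f\colon\G^\infty(U)\to F$ is $p$-nuclear. By \cite[Theorem 5.27]{DisJarTon-95}, there exist a Banach space $G$, a compact operator $T\in\K(G,F)$ and a $p$-integral operator $R\in\I_p(\G^\infty(U),G)$ such that $T_f=T\circ R$. Define $g:=R\circ g_U\in\H^\infty(U,G)$. By Theorem \ref{teo0}, the unique linearization of $g$ is $T_g=R$, so Proposition \ref{integral} yields $g\in\I_p^{\H^\infty}(U,G)$. Finally,
$$
f=T_f\circ g_U=T\circ R\circ g_U=T\circ g,
$$
which is the desired factorization.

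For the converse, suppose $f=T\circ g$ with $T\in\K(G,F)$ and $g\in\I_p^{\H^\infty}(U,G)$. By Proposition \ref{integral}, $T_g\in\I_p(\G^\infty(U),G)$. Then
$$
f=T\circ g=T\circ T_g\circ g_U=(T\circ T_g)\circ g_U,
$$
so by the uniqueness of the linearization in Theorem \ref{teo0}, $T_f=T\circ T_g$. Since $T$ is compact and $T_g$ is $p$-integral, the linear factorization result \cite[Theorem 5.27]{DisJarTon-95} gives that $T\circ T_g$ is $p$-nuclear, i.e.\ $T_f\in\Nu_p(\G^\infty(U),F)$. Applying Proposition \ref{nuclear} again, we conclude that $f\in\Nu_p^{\H^\infty}(U,F)$.

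Neither direction presents any real obstacle; the whole argument amounts to translating between $f$ and $T_f$ via the Mujica linearization (Theorem \ref{teo0}) and invoking the linear theorem on $p$-nuclearity. The only point that deserves care is the first direction, where one has to verify that the intermediate mapping $g=R\circ g_U$ inherits $p$-integrality at the holomorphic level; this is immediate from Proposition \ref{integral} once we identify $T_g=R$, but it is the single place where both linearization propositions are used in tandem.
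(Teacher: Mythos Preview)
Your proof is correct and follows essentially the same approach as the paper's: linearize via Mujica's theorem, apply the linear factorization \cite[Theorem 5.27]{DisJarTon-95}, and translate back using Propositions \ref{integral} and \ref{nuclear}. The only cosmetic difference is that in the forward direction the paper cites Corollary \ref{now} rather than Proposition \ref{integral} to conclude $g\in\I_p^{\H^\infty}(U,G)$, and in the converse the paper (by an apparent slip) invokes Proposition \ref{integral} where you correctly invoke Proposition \ref{nuclear}.
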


\begin{proof}
If $f\in\Nu_p^{\H^\infty}(U,F)$, then $T_f\in\Nu_p(\G^\infty(U),F)$ by Proposition \ref{nuclear}. Then Theorem 5.27 in \cite{DisJarTon-95} shows that there exist a complex Banach space $G$, an operator $T\in\K(G,F)$ and an operator $S\in\I_p(G^\infty(U),G)$ such that $T_f=T\circ S$. Hence we have $f=T_f\circ g_U=T\circ S\circ g_U=T\circ g$, where $g=S\circ g_U\in\I_p^{\H^\infty}(U,F)$ by Corollary \ref{now}. 

Conversely, if there exist a Banach space $G$, an operator $T\in\K(G,F)$ and a mapping $g\in\I_p^{\H^\infty}(U,G)$ such that $f=T\circ g$, then $T_f\circ g_U=T\circ T_g\circ g_U$ which gives $T_f=T\circ T_g$ where $T_g\in\I_p(G^\infty(U),F)$ by Proposition \ref{integral}. Hence $T_f\in\Nu_p(G^\infty(U),F)$ by \cite[Theorem 5.27]{DisJarTon-95}, and so $f\in\Nu_p^{\H^\infty}(U,F)$ by Proposition \ref{integral}.
\end{proof}

Next, we study the inclusion relationships between the new classes of bounded holomorphic mappings considered. In a clear parallel to the linear case, we have the following.

\begin{corollary}
Let $1\leq p<\infty$. 
\begin{enumerate}
	\item $\Nu^{\H^\infty}_p(U,F)\subseteq\I^{\H^\infty}_p(U,F)$ and $\iota^{\H^\infty}_p(f)\leq\nu^{\H^\infty}_p(f)$ for all $f\in\Nu^{\H^\infty}_p(U,F)$.
	\item If $F$ is finite-dimensional, then $\Nu^{\H^\infty}_p(U,F)=\I^{\H^\infty}_p(U,F)$ with $\nu^{\H^\infty}_p(f)=\iota^{\H^\infty}_p(f)$ for all $f\in\Nu^{\H^\infty}_p(U,F)$.
\end{enumerate}
\end{corollary}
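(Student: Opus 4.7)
The plan is to transfer both statements from their well-known linear analogues through the two linearization results already established in this section, namely Proposition \ref{integral} (for $p$-integral holomorphic mappings) and Proposition \ref{nuclear} (for $p$-nuclear holomorphic mappings). These propositions ensure that a bounded holomorphic mapping $f \in \H^\infty(U,F)$ belongs to $\I_p^{\H^\infty}(U,F)$ (resp. $\Nu_p^{\H^\infty}(U,F)$) if and only if its Mujica linearization $T_f \in \L(\G^\infty(U),F)$ belongs to $\I_p(\G^\infty(U),F)$ (resp. $\Nu_p(\G^\infty(U),F)$), and moreover the associated quantities $\iota_p^{\H^\infty}(f)$ and $\iota_p(T_f)$ (resp. $\nu_p^{\H^\infty}(f)$ and $\nu_p(T_f)$) coincide.

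For part (1), I would start with $f \in \Nu_p^{\H^\infty}(U,F)$ and invoke Proposition \ref{nuclear} to get $T_f \in \Nu_p(\G^\infty(U),F)$ with $\nu_p(T_f) = \nu_p^{\H^\infty}(f)$. Then the linear inclusion $\Nu_p \subseteq \I_p$ together with the norm estimate $\iota_p(T) \leq \nu_p(T)$ (see, e.g., \cite[Proposition 5.23]{DisJarTon-95}) yields $T_f \in \I_p(\G^\infty(U),F)$ with $\iota_p(T_f) \leq \nu_p(T_f)$. Applying Proposition \ref{integral} in the reverse direction then gives $f \in \I_p^{\H^\infty}(U,F)$ with $\iota_p^{\H^\infty}(f) = \iota_p(T_f) \leq \nu_p(T_f) = \nu_p^{\H^\infty}(f)$.

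For part (2), when $F$ is finite-dimensional the well-known identification $\I_p(E,F) = \Nu_p(E,F)$ with equal norms for every Banach space $E$ (a classical consequence of the finite-dimensional factorization of $p$-integral operators into finite-dimensional spaces, see \cite[Corollary 5.9]{DisJarTon-95} or the discussion following the definition of $p$-nuclear operators there) applied to $E = \G^\infty(U)$ yields $\I_p(\G^\infty(U),F) = \Nu_p(\G^\infty(U),F)$ isometrically. Then the same transfer via Propositions \ref{integral} and \ref{nuclear} passes this equality to $\I_p^{\H^\infty}(U,F) = \Nu_p^{\H^\infty}(U,F)$ with $\iota_p^{\H^\infty}(f) = \iota_p(T_f) = \nu_p(T_f) = \nu_p^{\H^\infty}(f)$.

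There is no real obstacle here: both assertions reduce immediately to the corresponding facts about linear operators once the linearization isomorphisms of Propositions \ref{integral} and \ref{nuclear} are applied. The only point that deserves a brief sentence in the write-up is the precise reference in \cite{DisJarTon-95} for the finite-dimensional coincidence of $\I_p$ and $\Nu_p$ used in part (2).
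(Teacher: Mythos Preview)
Your proposal is correct and follows essentially the same route as the paper: linearize via Propositions \ref{nuclear} and \ref{integral}, apply the known linear inclusions/identities for $\Nu_p$ and $\I_p$ from \cite{DisJarTon-95}, and transfer back. The only discrepancy is in the precise citations---the paper invokes \cite[Corollary 5.24(c)]{DisJarTon-95} for part (1) and \cite[Theorem 5.26]{DisJarTon-95} for part (2) rather than the references you mention---so you should double-check those pointers when writing up.
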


\begin{proof}
(1) If $f\in\Nu^{\H^\infty}_p(U,F)$, then $T_f\in\Nu_p(\G^\infty(U),F)$ with $\nu_p(T_f)=\nu_p^{\H^\infty}(f)$ by Proposition \ref{nuclear}. Since $\Nu_p(\G^\infty(U),F)\subseteq\I_p(\G^\infty(U),F)$ with $\iota_p(T)\leq\nu_p(T)$ for all $T\in\Nu_p(\G^\infty(U),F)$ by \cite[Corollary 5.24 (c)]{DisJarTon-95}, it follows that $T_f\in\I_p(\G^\infty(U),F)$. Hence $f\in\I^{\H^\infty}_p(U,F)$ with $\iota_p(T_f)=\iota_p^{\H^\infty}(f)$ by Proposition \ref{integral}, and further $\iota^{\H^\infty}_p(f)=\iota_p(T_f)\leq\nu_p(T_f)=\nu^{\H^\infty}_p(f)$.

(2) Assume that $F$ is finite-dimensional. If $f\in\I^{\H^\infty}_p(U,F)$, then $T_f\in\I_p(\G^\infty(U),F)$ with $\iota_p(T_f)=\iota_p^{\H^\infty}(f)$ by Proposition \ref{integral}. Hence $T_f\in\Nu_p(\G^\infty(U),F)$ with $\nu_p(T_f)=\iota_p(T_f)$ by \cite[Theorem 5.26]{DisJarTon-95}. It follows that $f\in\Nu^{\H^\infty}_p(U,F)$ with $\nu^{\H^\infty}_p(f)=\nu_p(T_f)$ by Proposition \ref{nuclear}, and so  $\nu^{\H^\infty}_p(f)=\iota^{\H^\infty}_p(f)$.
\end{proof}

In general, a bounded-holomorphic ideal $\I^{\H^\infty}$ does not coincide with $\I\circ\H^\infty$ as we see below. 

\begin{examples}
The ideal $\H_w^\infty$ of locally weakly compact bounded holomorphic mappings does not coincide with $\W\circ\H^\infty$. For example, let $\interior{\D}$ be the open unit disc in $\mathbb{C}$, and let $f\colon\interior{\D}\to c_0$ be the mapping defined by $f(z)=(z^n)_{n=1}^\infty$. By Example 3.2 in \cite{Muj-91}, $f$ is in $\H_k^\infty(\interior{\D},c_0)$ but $f$ is not in $\H_\W^\infty(\interior{\D},c_0)$. Hence $T_f$ fails to belong to $\W(\G^\infty(\interior{\D}),c_0)$ by \cite[Proposition 3.4 (b)]{Muj-91}. So by Theorem \ref{ideal} $f$ is not in $\W\circ\H^\infty$. The same example shows that in general $\H_k^\infty\neq\K\circ\H^\infty$ (see Example 3.3 in \cite{AroBotPelRue-10}).
\end{examples}

\section{Dual ideal of bounded holomorphic mappings}\label{section 4}

According to \cite[Section 4.4]{Pie-80}, the dual ideal of an operator ideal $\I$ is defined by 
$$
\I^\mathrm{dual}(E,F)=\left\{T\in\L(E,F)\colon T^*\in\I(F^*,E^*)\right\},
$$
where $E$ and $F$ are Banach spaces and $T^*$ is the adjoint operator of $T$. It is well known that $\I^\mathrm{dual}$ is also an operator ideal. Moreover, if $(\I,\left\|\cdot\right\|_\I)$ is a normed or Banach operator ideal, then $\I^\mathrm{dual}$ is so equipped with the norm
$$
\left\|T\right\|_{\I^\mathrm{dual}}=\left\|T^*\right\|_\I.
$$

In order to introduce the concept of bounded-holomorphic dual of an operator ideal, we will first need a holomorphic variant of the concept of adjoint operator between Banach spaces. 

\begin{definition}\cite{AroSch-76,Rya-88}
Let $E,F$ be complex Banach spaces and $U$ be an open subset of $E$. The transpose of a bounded holomorphic mapping $f\colon U\to F$ is the mapping $f^t\colon F^*\to\H^\infty(U)$ defined by 
$$
f^t(y^*)=y^*\circ f\qquad (y^*\in F^*).
$$
\end{definition}

It is easy to show (see, for example, \cite[Proposition 1.11]{JimRuiSep-22}) that $f^t$ is a continuous linear operator with $||f^t||=\left\|f\right\|_\infty$. Moreover, $f^t=J_U^{-1}\circ (T_f)^*$, where $J_U\colon\H^\infty(U)\to\G^\infty(U)^*$ is the isometric isomorphism defined in Theorem \ref{teo0}.

\begin{definition}
Given an operator ideal $\I$, the bounded-holomorphic dual of $\I$ is the set 
$$
(\I^{\H^\infty})^\mathrm{dual}(U,F)=\left\{f\in\H^\infty(U,F)\colon f^t\in\I(F^*,\H^\infty(U))\right\}.
$$
If $(\I,\left\|\cdot\right\|_\I)$ is a normed operator ideal and $f\in(\I^{\H^\infty})^\mathrm{dual}$, define
$$
\left\|f\right\|_{(\I^{\H^\infty})^\mathrm{dual}}=\left\|f^t\right\|_\I.
$$
\end{definition}

Next result assures that the bounded holomorphic mappings belonging to the bounded-holomorphic dual of an operator ideal $\I$ are exactly those that factorize through $\I^\mathrm{dual}$. The proof of this fact mimics the proofs of its polynomial version \cite[Theorem 2.2]{BotCalMor-14} and its Lipschitz version \cite[Theorem 3.9]{AchRueSanYah-16}.

\begin{theorem}\label{teo-dual}
Let $\I$ be an operator ideal. Then the transpose $f^t$ of a bounded holomorphic mapping $f$ belongs to $\I$ if and only if $f$ admits a factorization $f=T\circ g$ where $g$ is a bounded holomorphic mapping and the adjoint operator $T^*$ of the bounded linear operator $T$ belongs to $\I$, that is, 
$$
(\I^{\H^\infty})^\mathrm{dual}=\I^\mathrm{dual}\circ\H^\infty.
$$
Moreover, if $(\I,\left\|\cdot\right\|_\I)$ is a normed operator ideal, then
$$
\left\|f\right\|_{(\I^{\H^\infty})^\mathrm{dual}}=\left\|f\right\|_{\I^\mathrm{dual}}\circ\H^\infty
$$
for all $f\in(\I^{\H^\infty})^\mathrm{dual}$.
\end{theorem}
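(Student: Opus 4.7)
The plan is to reduce everything to the linearization picture and the identity $f^t=J_U^{-1}\circ (T_f)^{*}$ recalled just before the statement, and then invoke Theorem \ref{ideal} applied to the ideal $\I^{\mathrm{dual}}$. More precisely, I would work out the chain of equivalences
$$
f\in(\I^{\H^\infty})^{\mathrm{dual}}(U,F)\iff (T_f)^{*}\in\I(F^{*},\G^\infty(U)^{*})\iff T_f\in\I^{\mathrm{dual}}(\G^\infty(U),F)\iff f\in\I^{\mathrm{dual}}\circ\H^\infty(U,F),
$$
each of which preserves the relevant ideal norm.

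For the first equivalence, start from the definition $(\I^{\H^\infty})^{\mathrm{dual}}(U,F)=\{f\in\H^\infty(U,F): f^{t}\in\I(F^{*},\H^\infty(U))\}$ and use $f^{t}=J_U^{-1}\circ(T_f)^{*}$. Since $J_U$ is an isometric isomorphism from $\H^\infty(U)$ onto $\G^\infty(U)^{*}$ (Theorem \ref{teo0}(1)), the ideal property of $\I$ gives $f^{t}\in\I$ whenever $(T_f)^{*}\in\I$ (compose on the left with $J_U^{-1}$), and conversely $(T_f)^{*}=J_U\circ f^{t}\in\I$ whenever $f^{t}\in\I$. In the normed case, the same composition yields $\|f^{t}\|_{\I}=\|(T_f)^{*}\|_{\I}$ because $\|J_U\|=\|J_U^{-1}\|=1$.

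The second equivalence is simply the definition of $\I^{\mathrm{dual}}$ recalled at the start of Section \ref{section 4}, together with the definition $\|T_f\|_{\I^{\mathrm{dual}}}=\|(T_f)^{*}\|_{\I}$. The third equivalence is exactly Theorem \ref{ideal} applied to the operator ideal $\I^{\mathrm{dual}}$: $T_f\in\I^{\mathrm{dual}}(\G^\infty(U),F)$ if and only if $f\in\I^{\mathrm{dual}}\circ\H^\infty(U,F)$, with norm equality $\|T_f\|_{\I^{\mathrm{dual}}}=\|f\|_{\I^{\mathrm{dual}}\circ\H^\infty}$. Chaining the three equivalences yields the set equality $(\I^{\H^\infty})^{\mathrm{dual}}=\I^{\mathrm{dual}}\circ\H^\infty$, and chaining the three norm equalities yields $\|f\|_{(\I^{\H^\infty})^{\mathrm{dual}}}=\|f\|_{\I^{\mathrm{dual}}\circ\H^\infty}$ for every $f$ in this class.

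There is no real obstacle here; the proof is essentially a bookkeeping argument, and the only point to be careful about is to use both directions of the ideal property (composing with $J_U$ on one side and $J_U^{-1}$ on the other) to move freely between $f^{t}\in\I(F^{*},\H^\infty(U))$ and $(T_f)^{*}\in\I(F^{*},\G^\infty(U)^{*})$ while keeping the $\I$\nobreakdash-norm. Everything else is provided by the identification of the predual in Theorem \ref{teo0} and the composition-ideal linearization in Theorem \ref{ideal}, which is precisely why the polynomial proof in \cite{BotCalMor-14} and the Lipschitz proof in \cite{AchRueSanYah-16} can be transferred verbatim to this setting.
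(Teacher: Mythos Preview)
Your proof is correct and rests on the same two ingredients as the paper's: the relation $f^{t}=J_U^{-1}\circ(T_f)^{*}$ and the linearization Theorem \ref{ideal}. The forward inclusion is argued identically. For the reverse inclusion, however, the paper does not go back through $T_f$ and Theorem \ref{ideal}; instead it takes an arbitrary factorization $f=T\circ g$ with $T\in\I^{\mathrm{dual}}$, checks directly that $f^{t}=g^{t}\circ T^{*}$, and then bounds $\|f^{t}\|_{\I}\leq\|g^{t}\|\,\|T^{*}\|_{\I}=\|g\|_{\infty}\|T\|_{\I^{\mathrm{dual}}}$, taking the infimum over factorizations to get the norm inequality. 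Your route is a bit more economical, since the infimum step is already absorbed into the norm identity $\|f\|_{\I^{\mathrm{dual}}\circ\H^\infty}=\|T_f\|_{\I^{\mathrm{dual}}}$ from Theorem \ref{ideal}; the paper's route has the small advantage of exhibiting the transpose formula $(T\circ g)^{t}=g^{t}\circ T^{*}$ explicitly, which makes the argument self-contained without invoking Theorem \ref{ideal} a second time.
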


\begin{proof}
Let $f\in(\I^{\H^\infty})^\mathrm{dual}(U,F)$. Then $f\in\H^\infty(U,F)$ and $f^t\in\I(F^{*},\H^\infty(U))$. By Theorem \ref{teo0}, there exists $T_f\in\L(\G^\infty(U),F)$ such that $f=T_f\circ g_U$. Since $(T_f)^*=J_U\circ f^t\in\I(F^*,\G^\infty(U)^*)$, the ideal property of $\I$ yields that $T_f\in\I^\mathrm{dual}(\G^{\infty}(U),F)$. Hence $f\in\I^\mathrm{dual}\circ\H^\infty(U,F)$ with $\left\|f\right\|_{\I^\mathrm{dual}}\circ\H^\infty=\left\|T_f\right\|_{\I^\mathrm{dual}}$ by Theorem \ref{ideal}, and this proves the inclusion 
$$
(\I^{\H^\infty})^\mathrm{dual}(U,F)\subseteq\I^\mathrm{dual}\circ\H^\infty(U,F).
$$
Furthermore, we have 
\begin{align*}
\left\|f\right\|_{\I^\mathrm{dual}\circ\H^\infty}&=\left\|T_f\right\|_{\I^\mathrm{dual}}=\left\|(T_f)^*\right\|_{\I}=\left\|J_U\circ f^t\right\|_\I\\
                                          &\leq\left\|J_U\right\|\left\|f^t\right\|_{\I}=\left\|f^t\right\|_{\I}=\left\|f\right\|_{(\I^{\H^\infty})^\mathrm{dual}}.
\end{align*}
Conversely, let $f\in\I^\mathrm{dual}\circ\H^\infty(U,F)$. Then there are a complex Banach space $G$, a mapping $g\in\H^\infty(U,G)$ and an operator $T\in\I^\mathrm{dual}(G,F)$ such that $f=T\circ g$. Given $y^*\in F^*$, we have
$$
f^t(y^*)=(T\circ g)^t(y^*)=y^*\circ(T\circ g)=(y^*\circ T)\circ g=T^*(y^*)\circ g=g^t(T^*(y^*))=(g^t\circ T^*)(y^*),
$$
and thus $f^t=g^t\circ T^*$. Since $T^*\in\I(F^*,G^*)$ and $g^t\in\L(G^*,\H^\infty(U))$, we obtain that $f^t\in\I(F^*,\H^\infty(U))$. Hence $f\in(\I^{\H^\infty})^\mathrm{dual}(U,F)$ and this shows that 
$$
\I^\mathrm{dual}\circ\H^\infty(U,F)\subseteq(\I^{\H^\infty})^\mathrm{dual}(U,F).
$$
Moreover, we have  
\begin{align*}
\left\|f\right\|_{(\I^{\H^\infty})^\mathrm{dual}}&=\left\|f^t\right\|_{\I}=\left\|g^t\circ T^*\right\|_{\I}\\
                                             &\leq\left\|g^t\right\|\left\|T^*\right\|_{\I}=\left\|g\right\|_\infty\left\|T\right\|_{\I^\mathrm{dual}},
\end{align*}
and taking the infimum over all representations $T\circ g$ of $f$, we conclude that 
$$
\left\|f\right\|_{(\I^{\H^\infty})^\mathrm{dual}}\leq\left\|f\right\|_{\I^\mathrm{dual}\circ\H^\infty}.
$$
\end{proof}

Theorems 2.1, 2.2, 2.6 and 2.7 in \cite{JimRuiSep-22} can be deduced from our preceding results.

\begin{corollary}
Let $\I=\F,\overline{\F},\K,\W$. Then a bounded holomorphic mapping belongs to $\H^\infty_{\I}(U,F)$ if and only if its transpose belongs to $\I(F^*,\H^\infty(U))$.  
\end{corollary}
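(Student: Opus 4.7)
The plan is to reduce the biconditional, via Mujica's linearization, to the classical duality identity $\I = \I^{\mathrm{dual}}$ on each of the four ideals listed.

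First I would rewrite the left-hand side: by Proposition \ref{new} combined with Theorem \ref{ideal}, for $\I \in \{\F, \overline{\F}, \K, \W\}$ one has $f \in \H^\infty_\I(U,F)$ if and only if $T_f \in \I(\G^\infty(U), F)$. Next I would rewrite the right-hand side using the identity $f^t = J_U^{-1} \circ (T_f)^*$ noted just after the definition of the transpose; since $J_U \colon \H^\infty(U) \to \G^\infty(U)^*$ is an isometric isomorphism (and hence both $J_U$ and $J_U^{-1}$ lie in every operator ideal), the ideal property of $\I$ gives $f^t \in \I(F^*, \H^\infty(U))$ if and only if $(T_f)^* \in \I(F^*, \G^\infty(U)^*)$. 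Equivalently, this last condition is precisely $f \in \I^{\mathrm{dual}} \circ \H^\infty(U,F)$ by Theorem \ref{teo-dual}.

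Consequently the statement reduces to the equality $\I(\G^\infty(U), F) = \I^{\mathrm{dual}}(\G^\infty(U), F)$, i.e., to $T \in \I \iff T^* \in \I$ for the four ideals under consideration. This is trivial for $\F$ since an operator and its adjoint have the same rank, it is Schauder's theorem for $\K$, and Gantmacher's theorem for $\W$. For $\overline{\F}$ the forward direction follows from the isometric continuity of $T \mapsto T^*$ applied to any approximating sequence of finite-rank operators, and the reverse direction I would handle by invoking the corresponding characterization established in \cite[Theorem 2.2]{JimRuiSep-22}.

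The main obstacle is precisely the $\overline{\F}$ case: in full generality the inclusion $\overline{\F} \subseteq \overline{\F}^{\mathrm{dual}}$ can be strict (for instance, when the target space lacks the approximation property a compact operator need not be approximable, while its adjoint may land in a space with the approximation property and so be approximable). The point is that the linearization places us in a situation where the predecessor paper's argument applies, so the potential asymmetry does not actually arise in our range of parameters. Once this classical equivalence is in hand for each of the four ideals, chaining the equivalences established in the first two paragraphs yields the stated biconditional.
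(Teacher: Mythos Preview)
Your approach is essentially the paper's own: both arguments reduce the biconditional to the identity $\I=\I^{\mathrm{dual}}$ and invoke the linearization to pass between $f^t$ and $(T_f)^*$. The paper simply packages this as the chain $\H^\infty_{\I}=\I\circ\H^\infty=\I^{\mathrm{dual}}\circ\H^\infty=(\I^{\H^\infty})^{\mathrm{dual}}$, citing \cite[Proposition 4.4.7]{Pie-80} for the middle equality uniformly for all four ideals, whereas you unpack that middle step case by case.

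The one genuine problem is your treatment of $\overline{\F}$. Your claim that the inclusion $\overline{\F}\subseteq\overline{\F}^{\mathrm{dual}}$ can be strict is false: the ideal $\overline{\F}$ is symmetric, i.e.\ $\overline{\F}=\overline{\F}^{\mathrm{dual}}$ holds for \emph{all} pairs of Banach spaces, and this is exactly part of what \cite[Proposition 4.4.7]{Pie-80} records. Your proposed counterexample---a compact non-approximable $T$ whose adjoint is approximable because the codomain of $T^*$ has the approximation property---cannot exist: if $T^*\in\overline{\F}$ then $T^{**}\in\overline{\F}$, and a principle-of-local-reflexivity argument pushes the finite-rank approximants of $T^{**}|_E$ from $F^{**}$ back into $F$, using that $T$ is compact. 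So there is no ``potential asymmetry'' to worry about, and no need to restrict to the particular domain $\G^\infty(U)$.

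Relatedly, your fallback to \cite[Theorem 2.2]{JimRuiSep-22} for the reverse implication is circular in spirit: the sentence immediately preceding this corollary announces that its purpose is precisely to \emph{re-derive} Theorems 2.1, 2.2, 2.6 and 2.7 of \cite{JimRuiSep-22} from the machinery of the present paper. Once you replace that appeal by the general fact $\overline{\F}=\overline{\F}^{\mathrm{dual}}$ from \cite{Pie-80}, your argument coincides with the paper's.
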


\begin{proof}
We have 
$$
\H^\infty_{\I}=\I\circ\H^\infty=\I^\mathrm{dual}\circ\H^\infty=(\I^{\H^\infty})^\mathrm{dual},
$$
where the first equality follows from Proposition \ref{new}, the second from \cite[Proposition 4.4.7]{Pie-80} and the third from Theorem \ref{teo-dual}.
\end{proof}

With a proof similar to the above but replacing \cite[Proposition 4.4.7]{Pie-80} by \cite[Theorem 5.15]{DisJarTon-95}, we obtain the following result on 1-integral holomorphic mappings.

\begin{corollary}
A bounded holomorphic mapping belongs to $\I_1^{\H^\infty}(U,F)$ if and only if its transpose belongs to $\I_1(F^*,\H^\infty(U))$. $\hfill\qed$
\end{corollary}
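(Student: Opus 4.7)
The plan is to mirror the chain of equalities used in the previous corollary, replacing the Pietsch-type result for $\F,\overline{\F},\K,\W$ with the $1$-integral duality theorem from \cite{DisJarTon-95}. Concretely, I want to establish
\[
\I_1^{\H^\infty}=\I_1\circ\H^\infty=\I_1^\mathrm{dual}\circ\H^\infty=(\I_1^{\H^\infty})^\mathrm{dual},
\]
where the three equalities capture, respectively, the linearization-based description of $1$-integral holomorphic mappings, the self-duality of $1$-integral linear operators, and the dual-ideal identification via the transpose.

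First I would invoke Corollary \ref{now} to rewrite $\I_1^{\H^\infty}(U,F)=\I_1\circ\H^\infty(U,F)$, so that membership in $\I_1^{\H^\infty}$ is equivalent to admitting a factorization $f=T\circ g$ with $T\in\I_1$ and $g\in\H^\infty$. Next I would apply \cite[Theorem 5.15]{DisJarTon-95}, which asserts that a bounded linear operator $T\colon G\to F$ is $1$-integral if and only if its adjoint $T^*\colon F^*\to G^*$ is $1$-integral; this gives the identity $\I_1=\I_1^\mathrm{dual}$ of operator ideals and consequently $\I_1\circ\H^\infty=\I_1^\mathrm{dual}\circ\H^\infty$. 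Finally, Theorem \ref{teo-dual} applied to the operator ideal $\I_1$ provides $\I_1^\mathrm{dual}\circ\H^\infty=(\I_1^{\H^\infty})^\mathrm{dual}$, which by definition is exactly the class of bounded holomorphic mappings whose transpose $f^t\colon F^*\to\H^\infty(U)$ lies in $\I_1(F^*,\H^\infty(U))$.

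Composing the three identifications yields the desired equivalence: $f\in\I_1^{\H^\infty}(U,F)$ if and only if $f^t\in\I_1(F^*,\H^\infty(U))$. There is essentially no obstacle once Theorem \ref{teo-dual} and Corollary \ref{now} are available; the only substantive ingredient borrowed from outside is \cite[Theorem 5.15]{DisJarTon-95}, and its role is entirely parallel to that of \cite[Proposition 4.4.7]{Pie-80} in the previous corollary. If one wished to make the argument self-contained one could instead verify the implication $f^t\in\I_1\Rightarrow T_f\in\I_1^\mathrm{dual}$ directly via the factorization $f^t=J_U^{-1}\circ(T_f)^*$ noted after the definition of the transpose, but the three-line chain above is the cleanest presentation.
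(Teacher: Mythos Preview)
Your proposal is correct and is exactly the argument the paper intends: the chain $\I_1^{\H^\infty}=\I_1\circ\H^\infty=\I_1^\mathrm{dual}\circ\H^\infty=(\I_1^{\H^\infty})^\mathrm{dual}$ via Corollary \ref{now}, \cite[Theorem 5.15]{DisJarTon-95}, and Theorem \ref{teo-dual}, mirroring the previous corollary with the one substitution the paper indicates.
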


Regarding dual ideals, general representation theorems using topological tensor products provide useful tools for the linear case, even for the Lipschitz and multilinear cases. For future
research, it would be interesting to study a holomorphic analog for this kind of dual representations. It could provide another point of view more in the direction of the Defant--Floret book \cite{DefFlo-93} focusing on tensor products, which could open the door to more general approaches.\\





\textbf{Acknowledgements.} The authors would like to thank the referee for her (his) valuable comments and advice that have improved considerably the first version of this paper. This research was partially supported by project UAL-FEDER grant UAL2020-FQM-B1858, by Junta de Andaluc\'{\i}a grants P20$\_$00255 and FQM194, and by grant PID2021-122126NB-C31 funded by MCIN/AEI/ 10.13039/501100011033 and by ``ERDF A way of making Europe''.


\end{document}